\newtheorem{theorem}{Theorem}[section]
\newtheorem{lemma}[theorem]{Lemma}
\newtheorem{proposition}[theorem]{Proposition}
\newtheorem{corollary}[theorem]{Corollary}
\theoremstyle{definition}
\newtheorem{definition}[theorem]{Definition}
\newtheorem{example}[theorem]{Example}
\theoremstyle{remark}
\newtheorem{remark}[theorem]{Remark}
\numberwithin{equation}{section}
\newcommand{\essup}{\operatorname{ess\;sup}}
\newcommand{\esinf}{\operatorname{ess\;inf}}
\begin{document}

\title[(p, q)-Frame measures on LCA groups]{(p, q)-Frame measures on LCA groups:
\\Perturbations and Construction}

\author[A. Tahmasebi Birgan]{$^{1}$Abdolreza Tahmasebi Birgani}
\address{$^1$Department of Mathematics, Faculty of Science, Central Tehran Branch,
Islamic Azad University, Tehran, Iran.}
\email{tahmasebi\_22@yahoo.com}
\thanks{This work was partially supported by the Central Tehran Branch, Islamic Azad University.}

\author[M. S. Asgari]{$^{2}$Mohammad Sadegh Asgari}
\address{$^2$Department of Mathematics, Faculty of Science, Central Tehran Branch,
Islamic Azad University, Tehran, Iran.}
\email{msasgari@yahoo.com ; moh.asgari@iauctb.ac.ir}

\subjclass[2010]{Primary 43A15, 43A40, 43A70; Secondary 28A25, 42A85, 42B10.}

\date{Received: DDMMYY; Revised: DDMMYY; Accepted: DDMMYY.
\newline \indent $^{*}$Corresponding author}


\keywords{Frame measure, Spectral measure, Fourier frame, Perturbations, LCA-group.}

\begin{abstract}
Motivated to generalize the Fourier frame concept to Banach spaces we introduce (p, q)-Bessel/frame measures
for a given finite measure on LCA groups. We also present a general way of constructing (p, q)-Bessel/frame
measures for a given measure. Moreover, we prove that if a measure has an associated (p, q)-frame measure,
then it must have a certain uniformity in the sense that the weight is distributed quite uniformly on its support.
Next, we show that if the measures $\mu$ and $\lambda$ without atoms whose supports form a packing pair,
then $\mu\ast\lambda+\delta_g\ast\mu$ does not admit any (p, q)-frame measure. Finally, we analyze the
stability of (p, q)-frame measures under small perturbations. We prove new theorems concerning the stability
of (p, q)-frame measures under perturbation in both Hilbert spaces and Banach spaces.
\end{abstract}

\maketitle

Let $\mu$ be a positive and finite Borel measure on $\mathbb{R}^d$ and let $e_\omega$ denote the exponential function
$e_\omega(x)=e^{2\pi i\langle\omega, x\rangle}$ $(\omega, x\in\mathbb{R}^d)$. We say that $\mu$ is a frame-spectral
measure if there exists a countable set $\Omega\subset\mathbb{R}^d$ such that a system of exponential functions $E(\Omega)
=\{e_\omega\}_{\omega\in\Omega}$ forms a Fourier frame for $L^2(\mu)$. If such $\Omega$ exists, then it is called a frame
spectrum of $\mu$. We say that $\mu$ is a spectral measure and $\Omega$ is a spectrum for $\mu$ if the system $E(\Omega)$
forms an orthonormal basis for $L^2(\mu)$. The existence of a frame-spectral measure $\mu$ for $\mathbb{R}^d$ with frame
spectrum $\Omega$ allows one to decompose in a stable way (but generally non-unique) any function $f$ from the space $L^2(\mu)$
in a Fourier series $f=\sum_{\omega\in\Omega}c_\omega e_{\omega}$ with frequencies belonging to $\Omega$ (see \cite{YOU}).\par
One of the basic questions in frame theory is to classify when a measure $\mu$ is spectral or frame-spectral. The origin of this question
goes back to Fuglede in 1974 \cite{FUG}, by investigating which subsets of $\mathbb{R}^d$ with the Lebesgue measures are spectrum
and to Jorgensen and Pedersen \cite{JOR} who discovered the existence of fractal spectral measures. It is well known, however, that the
existence of a spectral or frame-spectral measure is a strong requirement, satisfied by a relatively small class of measures. Hence it is of
interest to understand whether measures that are not spectral or frame-spectral. The notion of frame-spectral measures is a natural
generalization of an exponential orthonormal basis which Duffin and Schaeffer first introduced in \cite{DUF}. In \cite{DUT} the notion
of Fourier frames for an arbitrary measure was extended to that of a frame measure. Frame measures generalize the notion of
Fourier frame in the sense that if $\{e_\omega\}_{\omega\in\Omega}$ forms a Fourier frame for $L^2(\mu)$, then
$\nu=\sum_{\omega\in\Omega}\delta_\omega$ is a frame measure of $\mu$. The formulation of frame measures originates
from the notion of continuous frames in frame theory literature \cite{GAB}. It also was studied and used in \cite{DUTK, FU}.
Fourier frames nowadays have a wide range of applications in signal transmission and reconstruction. For the more general background
of Fourier frame theory, readers may refer to \cite{CHR, HE, LAB, LAI, LEV, NIT, MAR, STR}. \par
One of the most important concrete realizations of frames are  p-frames. p-Frames (Banach frames) were first systematically studied in
\cite{CHRI, FEI, GROC}. Aside from their theoretical appeal, frames in $L^p$ spaces and other Banach function spaces are effective
tools for modeling a variety of natural signals and images \cite{DEV, MAL}. They are also used in the numerical computation of
integral and differential equations.\par
While the theory of Fourier frames and Gabor analysis have usually been investigated on $\mathbb{R}^d$, there is an increasing
interest in the study of these theories in the context of general locally compact abelian (LCA) groups (for example see \cite{CAB, CHRS, GRO}).
When looking carefully at the theory of frame measures it becomes apparent that it is strongly based on the additive group operation
of $\mathbb{R}^d$. It is therefore interesting to see if the theory can be set in the context of general LCA groups. The LCA group framework has several advantages. First, because it is important to have a valid theory for classical groups such as $\mathbb{Z}^d$, $\mathbb{T}^d$ and $\mathbb{Z}_n$. Second, the LCA group setting unifies several different results into a general framework.
This will be crucial particularly in applications, as in the case of the generalization of the Fourier transform to LCA groups (see
\cite{CHRS, FOL, RUD}).\par
In this paper, we develop the theory of frame measures to the Banach spaces $L^p$ which we call the (p, q)-frame measure in
LCA groups. Our emphasis will be on construction and perturbation results. Indeed, we present a general way of constructing
(p, q)-Bessel/frame measures for a given measure. We prove that if a measure has an associated (p, q)-frame measure, it must
have a certain uniformity in the sense that the weight is distributed quite uniformly on its support. We also introduce a finite
Borel measure on LCA group $G$ which does not admit any (p, q)-frame measure. We also study the stability of (p, q)-frame
measures. Similar to ordinary frames, we show that (p, q)-frame measures are stable under small perturbations.\par
We first start by recalling the basic definitions and the necessary notations which will be used throughout the paper.\par
A LCA group is an arbitrary locally compact Hausdorff abelian group $G$ such that the group operation and inversion are
continuous. We will denote the group operation by $+$ and neutral element by $0$. For technical reasons we will assume
that $G$ is a countable union of compact sets and metrizable. The dual group of $G$, that is, the set of
continuous characters on $G$, is denoted by $\widehat{G}$. The value of the character $\gamma\in\widehat{G}$
at the point $x\in G$ is written by $\langle x, \gamma\rangle$. A character is a function $\gamma: G\rightarrow\mathbb{T}$,
where $\mathbb{T}$ is the complex numbers whose absolute value is $1$ such that for all $x, y\in G$: \begin{enumerate}
\item[$(i)$] $|\langle x, \gamma\rangle|=1$.\item[$(ii)$] $\langle x+y, \gamma\rangle=\langle x, \gamma\rangle\langle y,
\gamma\rangle$. \end{enumerate} As the name would suggest, endowed with the weak$^*$ topology and group operation
pointwise multiplication, $\widehat{G}$ is a LCA group. The character $\gamma$ of the group $G$ is called trivial if for every
$x \in G$, we have $\langle x, \gamma\rangle=1$. So every group $G$ have a trivial character. Moreover, every
character of $G$ mapping the neutral element of $G$ to 1, i.e. $\langle 0, \gamma\rangle=1$. We also have
\begin{align*} \overline{\langle x, \gamma\rangle}\langle x, \gamma\rangle=\vert\langle x, \gamma\rangle\vert^2 = 1.
\end{align*} Thus \begin{align*}\langle x, \overline{\gamma}\rangle
=\overline{\langle x, \gamma\rangle}=\langle x, \gamma\rangle^{-1}=\langle x, -\gamma\rangle=
\langle -x, \gamma\rangle. \end{align*} It follows that if $\gamma\in \widehat{G}$ then
$\overline{\gamma}\in \widehat{G}$. Moreover, if $G$ is a finite order group of $n$, then any character imaginig every
element of $G$ to the nth unit root. In this case we can traslated range of any character with set of all the nth unit root.
In the other words, \begin{align*} U_n = \lbrace \xi^{k}_{n} : \xi_n = e^{2\pi i/n}, 0 \leq k \leq n-1\rbrace. \end{align*}
$U_n$ is a cyclic subgroup of $\mathbb{C}^\ast$ with generator $\xi_n = e^{2\pi i/n}$, where $\mathbb{C}^\ast$ is
multiplication group of nonzero complex numbers. \par
The other fundamental feature is the existence and uniqueness of a Haar measure on an LCA group. A Haar measure on an
LCA group is a non-negative, regular Borel measure, which is not identically zero and is both left and right translation-invariant
and unique up to a scalar multiple. If $f$ is a Borel-measurable function on $G$, then we denote the integral of $f$ with
respect to Haar measure $\lambda_G$ by $\int_G f(x)d\lambda_G(x)$. As with other measure and topological spaces, we can
define the usual function spaces: $L^p(G)$ $(1< p<\infty)$, in the following way \begin{align*} L^p(G)=\Big\{
f : G\rightarrow\mathbb{C} :\; f\;\text{is measurable and} \int_G |f(x)|^pd\lambda_G(x)<\infty\Big\}.\end{align*}
If $G$ is a second countable LCA group, $L^p(G)$ is separable. \par
The rest of this paper is organized as follows: In Section 2 we extend the notion of frame measure to (p, q)-frame measure on
LCA groups and we give some examples of (p, q)-frame measures. We investigate some general results concerning (p, q)-frame
measures. In Section 3 we state some general way of constructing (p, q)-Bessel/frame measures for a given measure. Our emphasis
will be on the convolution of the Dirac measures and absolutely continuous measures with bounded Radon-Nikodym derivative.
We prove that if a measure has an associated (p, q)-frame measure, it must have a certain uniformity in the sense that the weight
is distributed quite uniformly on its support. We also introduce a finite Borel measure on LCA group $G$ which does not admit
any (p, q)-frame measure. In Section4, we analyze the stability of (p, q)-frame measures under small perturbations. we prove
new theorems concerning the stability of (p, q)-frame measures under perturbation in both Hilbert spaces and Banach spaces.

\section{(p, q)-Frame measures on LCA groups}

In this section, we extend the notion of frame measure to (p, q)-frame measure for any $(1< p, q<\infty)$ on LCA groups, and
we investigate some general results concerning (p, q)-frame measures.
\begin{definition}\label{def2.3}
Let $\mu, \nu$ be two Borel measures on LCA groups $G$ and $\widehat{G}$, respectively. Then \begin{enumerate}\item[$(i)$]
the Fourier transform of a function $f\in L^1(\mu)$ with respect to $\mu$ is defined by \begin{align}\label{eq2.1}
\widehat{fd\mu}:\widehat{G}\rightarrow\mathbb{C}, \hspace{1cm}\widehat{fd\mu}(\gamma)=\int_G f(x)
\overline{\langle x, \gamma\rangle}d\mu(x), \quad\forall \gamma\in\widehat{G}.\end{align}
By putting $f$ to be the characteristic function on the support of $\mu$, we obtain the Fourier-Stieltjes transform of a measure
$\mu$ \begin{align}\label{eq2.2}\widehat\mu(\gamma)=\int_G \overline{\langle x, \gamma\rangle}d\mu(x),\quad\forall
\gamma\in\widehat{G}.\end{align} \item[$(ii)$]  the inverse Fourier transform of a function $\varphi\in  L^1(\nu)$ with respect
to $\nu$ is given by \begin{align}\label{eq2.3} \widecheck{\varphi d\nu}: G\rightarrow\mathbb{C},\hspace{1cm}
\widecheck{\varphi d\nu} (x)=\int_{\widehat{G}} \varphi(\gamma)\langle x, \gamma\rangle d\nu(\gamma),\quad
\forall x\in G.\end{align}\end{enumerate}
\end{definition}
\begin{remark}\label{rem2.4}
Note that the Fourier-Stieltjes transform of a finite Borel measure  $\mu$ on LCA group $G$ is a bounded and uniformly
continuous function on $\widehat{G}$.
\end{remark}
Now, we extend the concepts of (p, q)-Bessel/frame measures as follows.
\begin{definition}\label{def2.5}
Let $1<p, q<\infty$. Let $\mu$ be a finite Borel measure on  LCA group $G$ and $\nu$ be a Borel measure on $\widehat{G}$,
respectively. Then the measure $\nu$ is called a (p, q)-frame measure for $\mu$ if there exist constants $0<A\leq B<\infty$
such that \begin{equation}\label{eq2.4} A\|f\|^q_{L^p(\mu)}\leq\|\widehat{fd\mu}\|^q_{L^q(\nu)}\leq B\|f\|^q_{L^p(\mu)},
\hspace{0.5cm}\forall f\in L^p(\mu). \end{equation} $A$ and $B$ are called frame bounds for $\nu$. If $p=q=2$, $\nu$ is
called a frame measure for $\mu$. The measure $\nu$ is called a (p, q)-Bessel measure for $\mu$, when it satisfies at least
the upper (p, q)-frame measure inequality. If $A=B=1$, $\nu$ is called a (p, q)-Plancherel measure for $\mu$.
\end{definition}
\begin{example}\label{exam2.8}
Let $x_k\in G$, $k=1,\cdots, n$, and let $\Omega\subset\widehat{G}$ be a countable subset. Suppose that
$\{c_\omega\}_{\omega\in\Omega}$ is a sequence of positive real numbers such that $0<A\leq\sum_{\omega\in\Omega}
c_\omega\leq B<\infty$. Then the measure $\nu=\sum_{\omega\in\Omega}c_\omega\delta_\omega$ is a (p, q)-frame
measure for $\mu=\delta_{x_1}\ast\cdots\ast\delta_{x_n}$ with frame bounds $A$, $B$, where $\delta_x$ is the Dirac
measure at $x$.\end{example}
The following example illustrates the existence of a (p, q)-Bessel measure for an infinite continuous measure on an LCA
group $G$.
\begin{example}\label{exam2.81}
Let $\lambda_G$, $\lambda_{\widehat{G}}$ be the Haar measures on LCA groups $G$ and $\widehat{G}$, respectively.
Suppose $1<p \leq 2$ and $p^{-1}+q^{-1}=1$. Then by the Hausdorff-Young Inequality \cite{10}, $\lambda_{\widehat{G}}$
is a (p, q)-Bessel measure for $\lambda_G$.
\end{example}
\begin{example}\label{exam2.9}
Let $\lambda_G$, $\lambda_{\widehat{G}}$ be the Haar measures on LCA groups $G$ and $\widehat{G}$, respectively.
Suppose that $E\subset G$ is a set with positive and finite Haar measure, and also assume that $\varphi\in L^1(\widehat{G})$
satisfies that $0<A\leq\varphi(\gamma)\leq B<\infty$ a.e., on $\widehat{G}$. Define $d\mu=\chi_Ed\lambda_G$ and $d\nu
=\varphi d\lambda_{\widehat{G}}$. Then $\nu$ is a frame measure for $\mu$ with frame bounds $A$, $B$.
 Indeed, for every $f\in L^2(\mu)$ we have \begin{align*}\int_G|f(x)|^2d\mu(x)=\int_G\chi_E(x)|f(x)|^2d\lambda_G(x)
=\int_G|(\chi_Ef)(x)|^2d\lambda_G(x). \end{align*} This gives that $\chi_Ef\in  L^2(G)$ and $\|f\|_{L^2(\mu)}=
\|\chi_Ef\|_{L^2(G)}$. On the other hand, for every $\gamma\in\widehat{G}$ we obtain \begin{align*} \widehat{fd\mu}
(\gamma) &=\int_Gf(x)\overline{\langle x, \gamma\rangle}d\mu(x)=\int_G\chi_E(x)f(x)\overline{\langle x, \gamma
\rangle}d\lambda_G(x)=\widehat{(\chi_Ef)}(\gamma). \end{align*} Now by the Plancherel theorem we observe that
\begin{align*} A\|f\|^2_{L^2(\mu)}&=A\|\chi_Ef\|^2_{L^2(G)}=A\|\widehat{(\chi_Ef)}\|^2_{L^2(\widehat{G})}\\&
\leq\int_{\widehat{G}}\varphi(\gamma)\big|\widehat{(\chi_Ef)}(\gamma)\big|^2d\lambda_{\widehat{G}}(\gamma)
=\|\widehat{fd\mu}\|_{L^2(\nu)}^2 \\&\leq B\|\widehat{(\chi_Ef)}\|^2_{L^2(\widehat{G})}= B\|\chi_Ef\|^2_{L^2(G)}
=B\|f\|^2_{L^2(\mu)}. \end{align*}\end{example}
\begin{remark}\label{rem2.10} Let $1<p, q<\infty$. Let $\mu$ be a finite Borel measure on LCA group $G$.
Then the family of (p, q)-Bessel measures for $\mu$ is never empty. Let $\Omega\subseteq\widehat{G}$ be a countable
subset and $\{c_\omega\}_{\omega\in\Omega}$ is a sequence of positive real numbers such that $\sum_{\omega\in\Omega} c_\omega\leq B/\mu(G)$. Then the measure $\nu=\sum_{\omega\in\Omega} c_\omega\delta_\omega$ is a (p, q)-Bessel
measure for $\mu$ with Bessel bound $B$. Indeed, for every $f\in L^p (\mu)$,  we have \begin{align*} \int_{\widehat{G}}
|\widehat{fd\mu} (\gamma)|^q d\nu(\gamma)&=\sum_{\omega\in\Omega} c_\omega\int_{\widehat{G}}|\widehat{fd\mu}
(\gamma)|^q d\delta _\omega (\gamma)=\sum_{\omega\in\Omega} c_\omega |\widehat{fd\mu}(\omega)|^q \\&
\leq\sum_{\omega\in\Omega} c_\omega\|f\|_{L^p(\mu)}^q\mu(G)\leq B \|f\|_{L^p(\mu)}^q. \end{align*} \end{remark}
\begin{definition}\label{def2.11}
Let $1<p, q<\infty$ and let $\mu$ be a finite Borel measure on LCA group $G$. Then $\mu$ is called a (p, q)-frame-spectral
measure if there exists a countable set $\Omega\subset\widehat{G}$ such that $\Omega$ is a q-frame for $L^p(\mu)$, i.e.,
there exist two constants $0<A\leq B<\infty$ such that \begin{equation}\label{eq2.5}
A\|f\|_{L^p(\mu)}^q \leq\sum_{\omega\in\Omega}|\widehat{fd\mu}(\omega)|^q\leq B\|f\|_{L^p(\mu)}^q,
\hspace{1cm} \forall f\in L^p(\mu). \end{equation} When $\mu$ is a (p, q)-frame-spectral measure, then $\Omega$ is called
a (p, q)-frame-spectrum for $\mu$.
\end{definition}
\begin{remark}\label{rem2.12}
In general, if $\mu$ is a (p, q)-frame-spectral measure with (p, q)-frame-spectrum $\Omega$, then $\nu=\sum_{\omega\in\Omega}\delta_\omega$
is a (p, q)-frame measure for $\mu$. Indeed, if $A, B$ are the frame
bounds for q-frame $\Omega$. Then for every $f\in L^p(\mu)$ we have \begin{align*} A\| f\|_{L^p(\mu)}^q&\leq
\sum_{\omega\in\Omega} |\widehat{fd\mu}(\omega)|^q=\int_{\widehat{G}}|\widehat{fd\mu}(\gamma)|^q
d\nu(\gamma)\leq B\|f\|^q_{L^p(\mu)}. \end{align*} Conversely, if $\nu$ is purely atomic and is a (p, q)-frame
measure for finite Borel measure $\mu$, i.e. there exists a countable subset $\Omega\subseteq\widehat{G}$ such that
$\nu=\sum_{\omega\in\Omega} c_\omega\delta_\omega$ and $c_\omega>0$, then the set $\Lambda=
\big\{c_\omega^{\frac{1}{q}}\omega : \;\omega\in\Omega\big\}$ is a q-frame for $L^p(\mu)$. Indeed, for each $f\in L^p(\mu)$
we have \begin{align*}\sum_{\gamma\in\Lambda}|\widehat{fd\mu}(\gamma)|^q=\sum_{\omega\in\Omega}
|\widehat{fd\mu}(c_\omega^{\frac{1}{q}}\omega)|^q&=\int_{\widehat{G}}|\widehat{fd\mu}(\gamma)|^q
d\nu(\gamma)=\|\widehat{fd\mu}\|^q_{L^q(\nu)}. \end{align*} This shows that $\mu$ is a (p, q)-frame-spectral
measure with (p, q)-frame-spectrum $\Lambda$. \end{remark}

\section{Characterization and Construction of (p, q)-frame measures}

In this section we present a general way of constructing (p, q)-Bessel/frame measures for a given measure. We prove that
if a measure has an associated (p, q)-frame measure, it must have a certain uniformity in the sense that the weight is distributed
quite uniformly on its support. We also introduce a finite Borel measure on LCA group $G$ which does not admit any (p, q)-frame
measure. First, we show that every (p, q)-Bessel measure must be locally finite. So, if $\widehat{G}$ is connected, then any
(p, q)-Bessel measure is $\sigma$-finite. Indeed, we can generalize Proposition 2.1 of \cite{6} to the situation of (p, q)-Bessel
measures.
\begin{proposition}\label{pro3.1}
Let $\mu$ be a finite Borel measure on LCA group $G$ and $\nu$ be a Borel measure on $\widehat{G}$, respectively.
Let $\nu$ be a (p, q)-Bessel measure for $\mu$. Then for any compact subset $K\subset\widehat{G}$, there exists a
constant $C_K$ such that $\nu(K) \leq C_K$. Consequently, if $\widehat{G}$ is connected, then $\nu$ is $\sigma$-finite. \end{proposition}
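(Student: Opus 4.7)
The plan is to test the Bessel inequality against simple functions, namely characters, and exploit the continuity of $\widehat{\mu}$ near the identity of $\widehat{G}$ to get a uniform local lower bound on the left side, which will cap $\nu$ on small neighborhoods.

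More concretely, first I would fix $\gamma_0 \in \widehat{G}$ and take the test function $f(x) = \langle x, \gamma_0 \rangle$. Since $|f| \equiv 1$ and $\mu(G) < \infty$, we have $f \in L^p(\mu)$ with $\|f\|_{L^p(\mu)} = \mu(G)^{1/p}$. Using the character identity $\langle x, \gamma_0 \rangle \overline{\langle x, \gamma \rangle} = \overline{\langle x, \gamma - \gamma_0 \rangle}$ from the preliminaries, one computes
\begin{equation*}
\widehat{fd\mu}(\gamma) = \int_G \overline{\langle x, \gamma - \gamma_0 \rangle}\, d\mu(x) = \widehat{\mu}(\gamma - \gamma_0).
\end{equation*}
Substituting into the (p, q)-Bessel inequality gives, for every $\gamma_0 \in \widehat{G}$,
\begin{equation*}
\int_{\widehat{G}} |\widehat{\mu}(\gamma - \gamma_0)|^q\, d\nu(\gamma) \leq B\, \mu(G)^{q/p}.
\end{equation*}

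Next I would use Remark \ref{rem2.4}: $\widehat{\mu}$ is continuous on $\widehat{G}$ with $\widehat{\mu}(0) = \mu(G) > 0$. Hence there is an open neighborhood $U$ of $0 \in \widehat{G}$ such that $|\widehat{\mu}(\gamma)| > \mu(G)/2$ for all $\gamma \in U$. Restricting the previous inequality to $\gamma \in \gamma_0 + U$ yields
\begin{equation*}
\left(\tfrac{\mu(G)}{2}\right)^q \nu(\gamma_0 + U) \leq B\, \mu(G)^{q/p},
\end{equation*}
so $\nu(\gamma_0 + U) \leq C_0$, where $C_0 := 2^q B\, \mu(G)^{q/p - q}$ is a finite constant independent of $\gamma_0$. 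For a compact set $K \subset \widehat{G}$, the open cover $\{\gamma_0 + U : \gamma_0 \in K\}$ admits a finite subcover $\{\gamma_{0,i} + U\}_{i=1}^{N}$, so $\nu(K) \leq N C_0 =: C_K$, establishing the main claim.

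For the $\sigma$-finiteness conclusion, I would invoke the standard fact that every connected locally compact group is $\sigma$-compact (since $\widehat{G} = \bigcup_{n \geq 1} V^n$ for any symmetric precompact neighborhood $V$ of $0$). Writing $\widehat{G} = \bigcup_n K_n$ as a countable union of compact sets and applying the previous bound to each $K_n$ yields the $\sigma$-finiteness of $\nu$.

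I do not expect any serious obstacle here; the only small care is to verify that the test function $f = \langle \cdot, \gamma_0 \rangle$ really does transform Fourier translation into a translate of $\widehat{\mu}$, and that the constant $C_0$ extracted from the continuity neighborhood $U$ is independent of $\gamma_0$, both of which are immediate from the group structure and the finiteness of $\mu$.
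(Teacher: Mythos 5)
Your proposal is correct and follows essentially the same route as the paper's proof: test the Bessel inequality on characters $f=\langle\cdot,\gamma_0\rangle$ to obtain $\widehat{fd\mu}(\gamma)=\widehat{\mu}(\gamma-\gamma_0)$, use continuity of $\widehat{\mu}$ at $0$ (where it equals $\mu(G)>0$) to bound $\nu$ uniformly on translates of a fixed neighborhood, and conclude by compactness and $\sigma$-compactness of connected locally compact groups. The only cosmetic difference is that you take the explicit threshold $\mu(G)/2$ where the paper uses an unspecified $\delta>0$.
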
\begin{proof} Let $B$ be the Bessel bound for the (p, q)-Bessel measure $\nu$.
Since $\widehat{\mu}(0)=\mu(G)> 0$, by the uniformly continuous of $\widehat{\mu}$ there is a neighborhood $V$
of point zero in $\widehat{G}$ and $\delta > 0$ with $|\widehat{\mu}(\gamma)|\geq \delta$, for all $\gamma\in V$.
Moreover, for every $\xi\in\widehat{G}$ we have \begin{align*}\widehat{\xi d\mu}(\gamma) &=\int_G \langle y, \xi\rangle
\overline{\langle y, \gamma\rangle}d\mu(y)= \int _G \overline{\langle y, \gamma- \xi\rangle}d\mu(y)
=\widehat{\mu}(\gamma- \xi).\end{align*} This implies that \begin{align*} B\mu(G)^{\frac{q}{p}} &=
B\|\xi\|_{L^p (\mu)}^q \geq\|\widehat{\xi d\mu}\|_{L^q (\nu)}^q =\int_{\widehat{G}}
|\widehat{\xi d\mu}(\gamma)|^q d\nu(\gamma)\\&=\int_{\widehat{G}}|\widehat{\mu}(\gamma- \xi)|^q d \nu(\gamma)
\geq\int_{\xi+V}|\widehat{\mu}(\gamma- \xi)|^q d\nu(\gamma)\geq\delta^q\nu (\xi+V). \end{align*}
Therefore $\nu(\xi+V)\leq {B\mu(G)^{\frac{q}{p}} / \delta^q} $ for all $\xi\in\widehat{G}$. Now suppose that
$K\subseteq\widehat{G}$ is compact. Then there exist indices $1\leq i \leq n$ such that $K\subseteq
\bigcup_{i=1}^{N_K}(\xi_i+V)$. So we have \begin{align*} \nu(K) \leq \sum_{i=1}^{N_K}\nu(\xi_i+V) \leq
\sum _{i=1}^{N_K} \frac{B\mu(G)^{\frac{q}{p}}}{\delta^q}=\frac{B\mu(G)^{\frac{q}{p}}}{\delta^q}
N_K=C_K. \end{align*} Now, if $\widehat{G}$ is connected, then $\nu$ is a $\sigma$-compact and so $\nu$ is $\sigma$-finite.
\end{proof}
The next proposition shows that each finite Borel measure on $\widehat{G}$ is a (p, q)-Bessel measure for any finite Borel
measure on $G$.
\begin{proposition}\label{pro3.2}
Let $\mu$ and $\nu$ be two finite Borel measures on LCA groups $G$ and $\widehat{G}$, respectively. Then $\nu$ is a
(p, q)-Bessel measure for $\mu$.
\end{proposition}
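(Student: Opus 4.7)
The plan is to get a uniform pointwise bound on $|\widehat{fd\mu}(\gamma)|$ that is independent of $\gamma$, and then integrate against the finite measure $\nu$. Since $\mu$ is finite, Hölder's inequality should handle the pointwise bound without any harmonic analysis machinery at all.

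More explicitly, I would start by writing
\begin{align*}
|\widehat{fd\mu}(\gamma)| = \left|\int_G f(x)\overline{\langle x,\gamma\rangle}\,d\mu(x)\right| \leq \int_G |f(x)|\,d\mu(x),
\end{align*}
using $|\langle x,\gamma\rangle|=1$. Next, letting $p'$ denote the conjugate exponent of $p$, Hölder's inequality applied to $|f|\cdot 1$ with respect to the finite measure $\mu$ yields $\int_G |f(x)|\,d\mu(x)\leq \mu(G)^{1/p'}\|f\|_{L^p(\mu)}$. Raising to the $q$-th power gives the uniform estimate
\begin{align*}
|\widehat{fd\mu}(\gamma)|^q \leq \mu(G)^{q/p'}\|f\|^q_{L^p(\mu)} \qquad (\gamma\in\widehat{G}).
\end{align*}

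Integrating this bound with respect to $\nu$ and using $\nu(\widehat{G})<\infty$ produces
\begin{align*}
\|\widehat{fd\mu}\|^q_{L^q(\nu)} \leq \mu(G)^{q/p'}\nu(\widehat{G})\,\|f\|^q_{L^p(\mu)},
\end{align*}
so the upper (p, q)-frame inequality of Definition \ref{def2.5} holds with Bessel bound $B = \mu(G)^{q/p'}\nu(\widehat{G})$. There is essentially no obstacle here: the only ingredients are the trivial modulus bound $|\langle x,\gamma\rangle|=1$, Hölder's inequality, and the finiteness of both measures. The statement is, in effect, the observation that when $\mu$ is finite, the Fourier transform $f\mapsto \widehat{fd\mu}$ maps $L^p(\mu)$ continuously into $L^\infty(\widehat{G})$, and a bounded function belongs to $L^q(\nu)$ whenever $\nu$ is finite. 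I would present the argument in one short paragraph in the paper.
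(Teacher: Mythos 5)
Your proposal is correct and follows essentially the same route as the paper: a pointwise Hölder estimate $|\widehat{fd\mu}(\gamma)|\leq \mu(G)^{1/p'}\|f\|_{L^p(\mu)}$ (the paper phrases it as $\|f\|_{L^p(\mu)}\|\gamma\|_{L^{p'}(\mu)}$, which is the same computation since $|\langle x,\gamma\rangle|=1$), followed by integration against the finite measure $\nu$, yielding the identical Bessel bound $\nu(\widehat{G})\,\mu(G)^{q/p'}$.
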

\begin{proof}
Let $p'$ be the conjugate exponent to $p$. Then for every $f \in L^p(\mu)$ and $\gamma\in\widehat{G}$ we have
\begin{align*} \big|\widehat{fd\mu}(\gamma)\big|^q &\leq\|f\|_{L^p(\mu)}^q\|\gamma\|_{L^{p'}(\mu)}^q=
(\mu(G))^{\frac{q}{p'}}\|f\|_{L^p(\mu)}^q.\end{align*} Thus \begin{align*} \big\|\widehat{fd\mu}\big\|^q_{L^q(\nu)}&
=\int_{\widehat{G}}\big|\widehat{fd\mu}(\gamma)\big|^q d\nu(\gamma)\leq\int_{\widehat{G}}(\mu(G))^{\frac{q}{p'}}
\|f\|_{L^p(\mu)}^q d\nu(\gamma)\\&=\nu(\widehat{G})(\mu(G))^{\frac{q}{p'}}\|f\|^q_{L^p(\mu)}.\end{align*}
\end{proof}
In the following results, we show that the (p, q)-frame measure property is preserved under the convolution of the Dirac
measures. First, we show that the (p, q)-frame measure property is preserved under the convolution of the Dirac measures
on $G$.
\begin{theorem}\label{thm3.3}
Let $\mu$ be a finite Borel measure on $G$ and $x\in G$. Then a Borel measure $\nu$ on $\widehat{G}$ is a (p, q)-frame
measure for $\mu$ if and only if $\nu$ is a (p, q)-frame measure for $\delta_x\ast\mu$ with the same frame bounds.
\end{theorem}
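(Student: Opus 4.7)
The plan is to exploit the fact that convolution with $\delta_x$ is just translation of the measure, and that translation corresponds on the Fourier side to multiplication by a character of modulus one. Both sides of the (p,q)-frame inequality are therefore preserved, and the bounds are unchanged.

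First, I would set up the isometric correspondence between $L^p(\delta_x \ast \mu)$ and $L^p(\mu)$. Using the standard identity $\int_G h(y)\, d(\delta_x \ast \mu)(y) = \int_G h(x+y)\, d\mu(y)$, define, for any $g \in L^p(\delta_x \ast \mu)$, the function $f(y) = g(x+y)$. Then $f \in L^p(\mu)$ and $\|g\|_{L^p(\delta_x \ast \mu)} = \|f\|_{L^p(\mu)}$, and this map $g \mapsto f$ is a bijection.

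Second, I would show that the two Fourier transforms differ only by a unimodular factor. Applying Definition~\ref{def2.3} and using the multiplicative property $\langle x+y,\gamma\rangle = \langle x,\gamma\rangle\langle y,\gamma\rangle$,
\begin{align*}
\widehat{g\, d(\delta_x \ast \mu)}(\gamma) &= \int_G g(x+y)\,\overline{\langle x+y,\gamma\rangle}\, d\mu(y) \\
&= \overline{\langle x,\gamma\rangle}\int_G f(y)\,\overline{\langle y,\gamma\rangle}\, d\mu(y) = \overline{\langle x,\gamma\rangle}\,\widehat{f\, d\mu}(\gamma).
\end{align*}
Since $|\langle x,\gamma\rangle|=1$, we obtain $|\widehat{g\, d(\delta_x \ast \mu)}(\gamma)|^q = |\widehat{f\, d\mu}(\gamma)|^q$ pointwise on $\widehat{G}$, and integrating against $\nu$ yields $\|\widehat{g\, d(\delta_x \ast \mu)}\|_{L^q(\nu)} = \|\widehat{f\, d\mu}\|_{L^q(\nu)}$.

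Third, combining the two identities, the inequality $A\|f\|_{L^p(\mu)}^q \leq \|\widehat{f\,d\mu}\|_{L^q(\nu)}^q \leq B\|f\|_{L^p(\mu)}^q$ for all $f \in L^p(\mu)$ is literally the same statement as $A\|g\|_{L^p(\delta_x\ast\mu)}^q \leq \|\widehat{g\, d(\delta_x \ast \mu)}\|_{L^q(\nu)}^q \leq B\|g\|_{L^p(\delta_x\ast\mu)}^q$ for all $g \in L^p(\delta_x \ast \mu)$. This gives both implications simultaneously with identical constants. There is no substantial obstacle here; the only point requiring care is the translation identity for integration against $\delta_x \ast \mu$, which is standard.
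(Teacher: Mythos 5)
Your proposal is correct and follows essentially the same route as the paper: both arguments rest on the translation identity $\int_G h\,d(\delta_x\ast\mu)=\int_G h(x+\cdot)\,d\mu$ to get an isometric identification of the $L^p$ spaces, together with the observation that the two Fourier transforms differ only by the unimodular factor $\overline{\langle x,\gamma\rangle}$. The only cosmetic difference is that you set up the correspondence as a bijection $g\mapsto g(x+\cdot)$ and obtain both implications at once, whereas the paper substitutes $f(\cdot-x)$ for a given $f\in L^p(\mu)$ and notes that the converse is similar.
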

\begin{proof}
Let $\nu$ be a (p, q)-frame measure for $\delta_x\ast\mu$ with frame bounds $A$ and $B$. Then for every $f\in L^p(\mu)$
we have \begin{align*} \int_G|f(y-x)|^pd(\delta_x\ast\mu)(y)=\int_G\int_G|f(y+z-x)|^pd\delta_x(y)d\mu(z)=
\int_G|f(z)|^pd\mu(z).\end{align*} This gives that $f(\cdot-x)\in L^p(\delta_x\ast\mu)$ and $\|f\|_{L^p(\mu)}
=\|f(\cdot-x)\|_{L^p(\delta_x\ast\mu)}$. On the other hand, \begin{align*} \big|\widehat{f(\cdot-x)d(\delta_x\ast\mu)}
(\gamma)\big|&=\Big|\int_Gf(y-x)\overline{\langle y, \gamma\rangle}d(\delta_x\ast\mu)(y)\Big|\\&=
\Big|\int_Gf(y-x)\overline{\langle y-x, \gamma\rangle}d(\delta_x\ast\mu)(y)\Big|\\&=
\Big|\int_G\int_Gf(y+z-x)\overline{\langle y+z-x, \gamma\rangle}d\delta_x(y)d\mu(z)\Big|\\&=
\Big|\int_Gf(z)\overline{\langle z, \gamma\rangle}d\mu(z)\Big|=\big|\widehat{fd\mu}(\gamma)\big|. \end{align*}
Putting $f(\cdot-x)$ into the definition of (p, q)-frame measure in the hypothesis, we obtain that $\nu$ is a (p, q)-frame measure for
$\mu$. The converse is similar. \end{proof}
A similar result shows that the (p, q)-frame measure property is preserved under the convolution of the Dirac measures on
$\widehat{G}$.
\begin{theorem}\label{thm3.4}
Let $\mu$ be a finite Borel measure on $G$ and $\omega\in\widehat{G}$. Then a Borel measure $\nu$ on $\widehat{G}$ is
a (p, q)-frame measure for $\mu$ if and only if $\delta_\omega\ast\nu$ is a (p, q)-frame measure for $\mu$ with the same
frame bounds. \end{theorem}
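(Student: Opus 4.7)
The plan is to mimic the strategy of Theorem \ref{thm3.3}, but work on the dual side: instead of translating the function argument, I will use modulation (multiplication by a character) and exploit the translation rule for convolution with a Dirac measure on $\widehat{G}$.

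First, I would reduce the $L^q(\delta_\omega\ast\nu)$ norm to an $L^q(\nu)$ norm. For any Borel measurable $h\geq 0$ on $\widehat{G}$,
\begin{align*}
\int_{\widehat{G}} h(\gamma)\,d(\delta_\omega\ast\nu)(\gamma)
=\int_{\widehat{G}}\int_{\widehat{G}} h(\gamma+\eta)\,d\delta_\omega(\gamma)\,d\nu(\eta)
=\int_{\widehat{G}} h(\omega+\eta)\,d\nu(\eta).
\end{align*}
Applied to $h=|\widehat{fd\mu}|^q$, this gives $\|\widehat{fd\mu}\|_{L^q(\delta_\omega\ast\nu)}^q=\int_{\widehat{G}}|\widehat{fd\mu}(\omega+\eta)|^q\,d\nu(\eta)$.

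Next, I would rewrite $\widehat{fd\mu}(\omega+\eta)$ as the Fourier transform of a suitably modulated function against $\mu$. Using the character identity $\overline{\langle x,\omega+\eta\rangle}=\overline{\langle x,\omega\rangle}\,\overline{\langle x,\eta\rangle}$, and setting $g(x):=f(x)\overline{\langle x,\omega\rangle}$, we obtain
\begin{align*}
\widehat{fd\mu}(\omega+\eta)
=\int_G f(x)\overline{\langle x,\omega\rangle}\,\overline{\langle x,\eta\rangle}\,d\mu(x)
=\widehat{g\,d\mu}(\eta).
\end{align*}
Because $|\overline{\langle x,\omega\rangle}|=1$ pointwise, $|g(x)|=|f(x)|$, so $\|g\|_{L^p(\mu)}=\|f\|_{L^p(\mu)}$ and in particular $g\in L^p(\mu)$. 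Combining the two displays gives the key identity
\begin{align*}
\|\widehat{fd\mu}\|_{L^q(\delta_\omega\ast\nu)}^q=\|\widehat{g\,d\mu}\|_{L^q(\nu)}^q,\qquad \|f\|_{L^p(\mu)}=\|g\|_{L^p(\mu)}.
\end{align*}

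Now the theorem follows painlessly. If $\nu$ is a $(p,q)$-frame measure for $\mu$ with bounds $A,B$, applying the frame inequality to $g$ and substituting the identities above yields the frame inequality for $\delta_\omega\ast\nu$ acting on $f$, with the same bounds. Conversely, the map $f\mapsto g=f\cdot\overline{\langle\cdot,\omega\rangle}$ is a modulus-preserving bijection of $L^p(\mu)$ onto itself (its inverse is $g\mapsto g\cdot\langle\cdot,\omega\rangle$), so the argument is entirely reversible. I do not foresee a genuine obstacle; the only point requiring minor care is verifying the character identity and the interchange of integrals in the first display, both of which are standard for $\sigma$-finite positive integrands.
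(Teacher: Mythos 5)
Your proposal is correct and follows essentially the same route as the paper: both rest on the identity $\int_{\widehat{G}} h\,d(\delta_\omega\ast\nu)=\int_{\widehat{G}} h(\omega+\cdot)\,d\nu$ combined with absorbing the shift by $\omega$ into a unimodular modulation of $f$ that preserves the $L^p(\mu)$ norm. The only cosmetic difference is that you modulate by $\overline{\langle\cdot,\omega\rangle}$ while the paper modulates by $\langle\cdot,\omega\rangle$ and phrases the computation from the converse direction; the substance is identical.
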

\begin{proof}
Let $\delta_\omega\ast\nu$ be a (p, q)-frame measure for $\mu$ with frame bounds $A$ and $B$. Then for every $f\in L^p(\mu)$
and $\omega, \gamma\in\widehat{G}$ we have \begin{align*} \widehat{(\omega f)d\mu}(\omega+\gamma)&=\int_G
(\omega f)(x)\overline{\langle x, \omega+\gamma\rangle}d\mu(x)\\&=\int_G |\langle x, \omega\rangle|^2 f(x)
\overline{\langle x, \gamma\rangle}d\mu(x)\\&=\widehat{fd\mu}(\gamma).\end{align*}
From this we obtain  \begin{align*}\int_{\widehat{G}}\big|\widehat{(\omega f)d\mu}(\gamma)\big|^q
d(\delta_\omega\ast\nu)(\gamma)&=\int_{\widehat{G}}\int_{\widehat{G}}\big|\widehat{(\omega f)d\mu}(\zeta+\gamma)
\big|^qd\delta_\omega(\zeta)d\nu(\gamma)\\&=\int_{\widehat{G}}\big|\widehat{(\omega f)d\mu}(\omega+\gamma)
\big|^qd\nu(\gamma)\\&=\int_{\widehat{G}}\big|\widehat{fd\mu}(\gamma)\big|^qd\nu(\gamma). \end{align*}
Since $\|\omega f\|_{L^p(\mu)}=\|f\|_{L^p(\mu)}$, this implies that $\nu$ is a (p, q)-frame measure for $\mu$ with
the same frame bounds. The converse is similar. \end{proof}
As a consequence of Theorems \ref{thm3.3} and \ref{thm3.4}, we show that the (p, q)-frame measure property is
preserved under the convolution of the Dirac measures. We leave the proof to interested readers.
\begin{corollary}\label{cor3.5}
Let $x\in G,\;\omega\in\widehat{G}$. Then $\nu$ is a (p, q)-frame measure for $\mu$ if and only if $\delta_\omega\ast\nu$ is
a (p, q)-frame measure for $\delta_x\ast\mu$ with the same frame bounds.
\end{corollary}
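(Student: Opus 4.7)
The plan is to obtain the corollary as a direct two-step chain of Theorems \ref{thm3.3} and \ref{thm3.4}, each contributing one of the two translations: Theorem \ref{thm3.3} handles the shift by $x$ on the group $G$, while Theorem \ref{thm3.4} handles the shift by $\omega$ on the dual group $\widehat{G}$. Since both equivalences preserve the frame bounds $A,B$ exactly, composing them will automatically give the claim about equal bounds.

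Concretely, I would argue as follows. First, apply Theorem \ref{thm3.3} with the given finite Borel measure $\mu$ on $G$ and point $x\in G$: this states that $\nu$ is a (p, q)-frame measure for $\mu$ with bounds $A,B$ if and only if $\nu$ is a (p, q)-frame measure for $\delta_x\ast\mu$ with the same bounds $A,B$. Next, regard $\delta_x\ast\mu$ as the base measure and apply Theorem \ref{thm3.4} with the character $\omega\in\widehat{G}$: this gives that $\nu$ is a (p, q)-frame measure for $\delta_x\ast\mu$ with bounds $A,B$ if and only if $\delta_\omega\ast\nu$ is a (p, q)-frame measure for $\delta_x\ast\mu$ with the same bounds. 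Concatenating these two biconditionals yields the desired equivalence.

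The only small point to verify before invoking Theorem \ref{thm3.4} at the second step is that $\delta_x\ast\mu$ is itself a finite Borel measure on $G$, so that it is a legitimate base measure in the hypothesis; this is immediate from $(\delta_x\ast\mu)(G)=\mu(G)<\infty$ and the fact that convolution with a Dirac measure preserves Borel measurability. There is no substantive obstacle in the argument: the whole content is the observation that the translation on $G$ and the modulation (translation on $\widehat{G}$) act independently, so Theorems \ref{thm3.3} and \ref{thm3.4} can be composed in either order to produce the joint statement. Accordingly, the proof will be only a few lines, exactly as the authors anticipate by leaving it to the reader.
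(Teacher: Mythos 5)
Your proposal is correct and is precisely the argument the paper intends: the corollary is stated as a consequence of Theorems \ref{thm3.3} and \ref{thm3.4}, and composing the two bound-preserving equivalences (applying Theorem \ref{thm3.4} to the finite measure $\delta_x\ast\mu$) is exactly the omitted proof. Your remark that $\delta_x\ast\mu$ remains a finite Borel measure is the only hypothesis to check, and you handle it correctly.
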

\begin{theorem}\label{thm3.6}
Let $\nu$ be a (p, q)-frame measure for $\mu$. Then $\nu\ast\rho$ is a (p, q)-frame measure for $\mu$ for all finite Borel
measure $\rho$ on $\widehat{G}$.\end{theorem}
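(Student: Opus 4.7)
\medskip

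\noindent\textbf{Proof plan.} The plan is to reduce the statement to Theorem \ref{thm3.4} by viewing the convolution $\nu\ast\rho$ as an averaged family of translates of $\nu$. Concretely, the starting point is the definition of convolution of measures on $\widehat{G}$, which gives, for every non-negative or integrable Borel function $h$ on $\widehat{G}$,
\begin{equation*}
\int_{\widehat{G}} h(\gamma)\,d(\nu\ast\rho)(\gamma)
=\int_{\widehat{G}}\int_{\widehat{G}} h(\zeta+\eta)\,d\nu(\zeta)\,d\rho(\eta)
=\int_{\widehat{G}}\!\left[\int_{\widehat{G}} h(\gamma)\,d(\delta_\eta\ast\nu)(\gamma)\right]\!d\rho(\eta).
\end{equation*}
I would apply this with $h(\gamma)=\bigl|\widehat{fd\mu}(\gamma)\bigr|^{q}$, where $f\in L^{p}(\mu)$ is arbitrary. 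The inner integrand $h$ is continuous and bounded (Remark \ref{rem2.4}), so Fubini's theorem is available for the iterated integral.

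Next, I would invoke Theorem \ref{thm3.4}: for every fixed $\eta\in\widehat{G}$, the translated measure $\delta_\eta\ast\nu$ is again a (p, q)-frame measure for $\mu$ with the \emph{same} bounds $A$ and $B$. Hence, pointwise in $\eta$,
\begin{equation*}
A\,\|f\|_{L^{p}(\mu)}^{q}
\;\leq\;\int_{\widehat{G}}\bigl|\widehat{fd\mu}(\gamma)\bigr|^{q}\,d(\delta_\eta\ast\nu)(\gamma)
\;\leq\; B\,\|f\|_{L^{p}(\mu)}^{q}.
\end{equation*}
Integrating this two-sided inequality against the finite positive measure $d\rho(\eta)$ and combining with the identity displayed above then yields
\begin{equation*}
A\,\rho(\widehat{G})\,\|f\|_{L^{p}(\mu)}^{q}
\;\leq\;\int_{\widehat{G}}\bigl|\widehat{fd\mu}(\gamma)\bigr|^{q}\,d(\nu\ast\rho)(\gamma)
\;\leq\; B\,\rho(\widehat{G})\,\|f\|_{L^{p}(\mu)}^{q},
\end{equation*}
so $\nu\ast\rho$ is a (p, q)-frame measure for $\mu$ with frame bounds $A\,\rho(\widehat{G})$ and $B\,\rho(\widehat{G})$ (assuming the tacit convention $\rho(\widehat{G})>0$, without which the statement is vacuous).

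There is no deep obstacle here: the whole argument is a clean Fubini swap, and Theorem \ref{thm3.4} supplies exactly the translation invariance of the frame bounds that the convolution structure requires. The only technical point worth flagging is the justification of Fubini, which is immediate because $\bigl|\widehat{fd\mu}\bigr|^{q}$ is a bounded continuous function on $\widehat{G}$ and both $\nu$ (which is locally finite by Proposition \ref{pro3.1}, and $\sigma$-finite on each relevant set) and $\rho$ (which is finite) are regular Borel measures. In other words, the hardest step is just recognizing that $\nu\ast\rho$ is a superposition over $\eta$ of the measures $\delta_\eta\ast\nu$, each of which inherits the frame bounds from $\nu$ via the modulation computation in Theorem \ref{thm3.4}.
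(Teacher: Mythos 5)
Your proposal is correct and is essentially the paper's own argument: the paper likewise writes $\|\widehat{fd\mu}\|^q_{L^q(\nu\ast\rho)}=\int_{\widehat{G}}\int_{\widehat{G}}|\widehat{fd\mu}(\gamma+\omega)|^q\,d\nu(\gamma)\,d\rho(\omega)$ and controls the inner integral by the modulation identity $\widehat{fd\mu}(\omega+\gamma)=\widehat{(-\omega f)d\mu}(\gamma)$ together with $\|-\omega f\|_{L^p(\mu)}=\|f\|_{L^p(\mu)}$, arriving at the same bounds $A\rho(\widehat{G})$ and $B\rho(\widehat{G})$. The only cosmetic difference is that you outsource that inner estimate to Theorem \ref{thm3.4} (whose proof is exactly this modulation computation), and you sensibly flag the tacit assumption $\rho(\widehat{G})>0$, which the paper omits.
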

\begin{proof}
Let $\nu$ be a (p, q)-frame measure for $\mu$ with frame bounds $A$ and $B$. Then for every $f\in L^p(\mu)$ and
$\omega, \gamma\in\widehat{G}$ we have \begin{align*}
\widehat{fd\mu}(\omega+\gamma)&=\int_G f(x) \overline{\langle x, \omega+\gamma\rangle}d\mu(x)\\&
=\int_G (-\omega f)(x)\overline{\langle x, \gamma\rangle}d\mu(x)=\widehat{(-\omega f)d\mu}(\gamma).
\end{align*} This gives \begin{align*}
\big\|\widehat{fd\mu}\big\|^q_{L^q(\nu\ast\rho)}&=\int_{\widehat{G}}\big|\widehat{fd\mu}(\gamma)\big|^q
d(\nu\ast\rho)(\gamma)\\&=\int_{\widehat{G}}\int_{\widehat{G}}\big|\widehat{fd\mu}(\gamma+\omega)\big|^q
d\nu(\gamma)d\rho(\omega)\\&=\int_{\widehat{G}}\int_{\widehat{G}}\big|\widehat{(-\omega f)d\mu}(\gamma)\big|^q
d\nu(\gamma)d\rho(\omega)\\&=\int_{\widehat{G}}\|\widehat{(-\omega f)d\mu}\|^q_{L^q(\nu)}
d\rho(\omega).\end{align*} Since $\|(-\omega f)\|_{L^p(\mu)}=\|f\|_{L^p(\mu)}$, this implies that $\nu\ast\rho$ is a
(p, q)-frame measure for $\mu$ with frame bounds $A\rho(\widehat{G})$ and $B\rho(\widehat{G})$.
\end{proof}
\begin{definition}\label{def3.07}
Let $\mu, \lambda$ be two finite Borel measures on LCA group $G$. Then \begin{enumerate}\item[$(i)$] the closed support of
$\mu$ is the smallest closed set $K\subset G$ such that $\mu$ has full measure (i.e. $\mu(K)=\mu(G)$). We will denote it by
$K_\mu$. Also if  $\rho=\mu\ast\lambda$, then $K_\rho=K_\mu+K_\lambda$. \item[$(ii)$] A Borel set $X\subset G$ is called
a Borel support of $\mu$ if $X\subset K_\mu$ and $\mu(X)=\mu(K_\mu)$. \item[$(iii)$] We say that the pair $(\mu, \lambda)$
is a packing pair of measures if \begin{align*} (K_\mu - K_\mu)\cap (K_\lambda - K_\lambda)=\{0\}, \end{align*} where
$K_\mu+K_\lambda$ is the standard Minkowski sum of two sets.
\end{enumerate}\end{definition}
The following results show that the (p, q)-frame measure property is preserved with respect to absolutely continuous measures
with bounded Radon-Nikodym derivative.
\begin{theorem}\label{thm3.7}
Let $\mu$ be a finite Borel measure on $G$ and let $\phi$ be a $\mu$-measurable function such that $0<A\leq\phi(x)\leq
B<\infty$ $\mu$-a.e. on $K_\mu$. Then a Borel measure $\nu$ on $\widehat{G}$ is a (p, q)-frame measure for $\mu$ if
and only if $\nu$ is a (p, q)-frame measure for $\phi d\mu$.
\end{theorem}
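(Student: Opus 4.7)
The plan is to reduce the theorem to a single identity between the two relevant Fourier transforms, together with the two-sided norm comparison implied by the uniform bounds on $\phi$.

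First I would record the identity that follows directly from Definition \ref{def2.3} and the meaning of the weighted measure $\phi\,d\mu$: for any integrand $f$ for which either side is defined,
\begin{align*}
\widehat{f\,d(\phi\,d\mu)}(\gamma)
 = \int_G f(x)\overline{\langle x,\gamma\rangle}\,\phi(x)\,d\mu(x)
 = \widehat{(\phi f)\,d\mu}(\gamma), \qquad \gamma\in\widehat{G}.
\end{align*}
Thus the Fourier transform of $f$ against $\phi\,d\mu$ is the Fourier transform of $\phi f$ against $\mu$, converting the question into a statement purely about $\mu$.

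Next, using $A\leq\phi\leq B$ $\mu$-a.e.\ on $K_\mu$ and the factorization $\phi^{p}=\phi^{p-1}\cdot\phi$ (valid since $p>1$), I would derive the pointwise bound $A^{p-1}\phi\leq\phi^{p}\leq B^{p-1}\phi$ on $K_\mu$. Integrating $|f|^p$ against $\mu$ against this sandwich gives the $L^p$-norm equivalence
\begin{align*}
A^{(p-1)/p}\,\|f\|_{L^p(\phi\,d\mu)}
 \leq \|\phi f\|_{L^p(\mu)}
 \leq B^{(p-1)/p}\,\|f\|_{L^p(\phi\,d\mu)},
\end{align*}
so that $f\mapsto\phi f$ is a linear bijection $L^p(\phi\,d\mu)\to L^p(\mu)$.

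The forward direction then falls out in one step. Assuming $\nu$ is a $(p,q)$-frame measure for $\mu$ with bounds $A',B'$, I substitute $\phi f$ into the frame inequality for $\mu$, replace $\|\widehat{(\phi f)\,d\mu}\|_{L^q(\nu)}$ by $\|\widehat{f\,d(\phi\,d\mu)}\|_{L^q(\nu)}$ via the identity, and replace $\|\phi f\|_{L^p(\mu)}$ by the equivalent norm $\|f\|_{L^p(\phi\,d\mu)}$; the resulting $(p,q)$-frame bounds for $\phi\,d\mu$ come out as $A'A^{(p-1)q/p}$ and $B'B^{(p-1)q/p}$. For the converse I would simply apply the forward direction to the measure $\phi\,d\mu$ and to the density $1/\phi$, which satisfies $1/B\leq 1/\phi\leq 1/A$ on $K_{\phi\,d\mu}=K_\mu$ and for which $(1/\phi)\,d(\phi\,d\mu)=d\mu$.

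The argument is essentially computational and I do not expect any genuine obstacle; the only point requiring care is the bookkeeping of exponents in the constant chasing, where the exponent $(p-1)/p$ arises precisely from writing $\|\phi f\|^{p}_{L^p(\mu)}=\int|f|^{p}\phi^{p-1}\,d(\phi\,d\mu)$.
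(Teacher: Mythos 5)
Your proposal is correct and follows essentially the same route as the paper: the identity $\widehat{f\,d(\phi\,d\mu)}=\widehat{(\phi f)\,d\mu}$, the norm equivalence $A^{q(1-1/p)}\|f\|^q_{L^p(\phi d\mu)}\leq\|\phi f\|^q_{L^p(\mu)}\leq B^{q(1-1/p)}\|f\|^q_{L^p(\phi d\mu)}$ obtained from $\phi^p=\phi^{p-1}\cdot\phi$, and the resulting chained frame inequalities with the same constants. Your handling of the converse (applying the forward direction to the density $1/\phi$) is a slightly cleaner formalization of what the paper dismisses as ``the calculation above,'' and it reproduces the paper's stated converse bounds $CB^{-q(1-1/p)}$ and $DA^{-q(1-1/p)}$ exactly.
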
\begin{proof}
Let $\nu$ be a (p, q)-frame measure for $\mu$ with frame bounds $C$ and $D$. Then for any $f\in L^p(\phi d\mu)$
we have \begin{align*} A^{q(1-\frac{1}{p})}\|f\|^q_{L^p(\phi d\mu)}\leq\|\phi f\|^q_{L^p (\mu)}\leq B^{q(1-\frac{1}{p})}
\|f\|^q_{L^p (\phi d\mu)}.\end{align*} This gives that $f\in L^p(\phi d\mu)$ if and only if $\phi f\in L^p(\mu)$. On the
other hand, we obtain
\begin{align*} \widehat{fd(\phi d\mu)}(\gamma)=\int_Gf(x)\overline{\langle x, \gamma\rangle}\phi(x)d\mu(x)=
\widehat{(\phi f)d\mu}(\gamma). \end{align*} This yields \begin{align*} CA^{q(1-\frac{1}{p})}\|f\|^q_{L^p(\phi d\mu)}
&\leq C\|\phi f\|^q_{L^p(\mu)}\leq\|\widehat{(\phi f)d\mu}\|^q_{L^q(\nu)}\\&=\|\widehat{fd(\phi d\mu)}\|^q_{L^q(\nu)}
\leq D\|\phi f\|^q_{L^p(\mu)}\leq DB^{q(1-\frac{1}{p})}\|f\|^q_{L^p(\phi d\mu)}. \end{align*} The opposite direction
is obvious. Indeed, if $\nu$ is a (p, q)-frame measure for $\phi d\mu$ with frame bounds $C$ and $D$, the calculation
above implies that $\nu$ is a (p, q)-frame measure for $\mu$ with frame bounds $CB^{-q(1-\frac{1}{p})}$ and
$DA^{-q(1-\frac{1}{p})}$
\end{proof}
The following results are a slight variation of Theorem \ref{thm3.7}. We leave the proofs to interested readers.
\begin{theorem}\label{thm3.8}
Let $\mu$ be a finite Borel measure on $G$ and $\nu$ be a Borel measure on $\widehat{G}$. Let $\psi$ is a $\nu$-measurable
function such that $0<A\leq\psi(\gamma)\leq B<\infty$ $\nu$-a.e. on $K_\nu$. Then $\nu$ is a (p, q)-frame measure for $\mu$
if and only if $\psi d\nu$ is a (p, q)-frame measure for $\mu$.
\end{theorem}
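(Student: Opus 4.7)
The plan is to exploit the fact that, unlike Theorem~\ref{thm3.7}, the weight $\psi$ here lives on the dual side $\widehat{G}$ where the Fourier transform is evaluated, while the input function $f\in L^p(\mu)$ and the measure $\mu$ on $G$ are untouched. Therefore $\widehat{fd\mu}$ is exactly the same object in both statements, and the only thing that changes is the reference measure against which we integrate $|\widehat{fd\mu}|^q$. This reduces the theorem to a two-sided pointwise comparison of integrands.

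First I would record the key identity: for every $f\in L^p(\mu)$,
\begin{align*}
\|\widehat{fd\mu}\|^q_{L^q(\psi d\nu)}
=\int_{\widehat{G}}|\widehat{fd\mu}(\gamma)|^q\psi(\gamma)\,d\nu(\gamma).
\end{align*}
Since $\psi$ only needs to be controlled on $K_\nu$ (its complement being $\nu$-null), the hypothesis $A\leq\psi\leq B$ $\nu$-a.e.\ on $K_\nu$ yields the sandwich
\begin{align*}
A\,\|\widehat{fd\mu}\|^q_{L^q(\nu)}
\leq\|\widehat{fd\mu}\|^q_{L^q(\psi d\nu)}
\leq B\,\|\widehat{fd\mu}\|^q_{L^q(\nu)}
\end{align*}
for every $f\in L^p(\mu)$. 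This is the entire analytic content of the proof; everything else is bookkeeping of constants.

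For the forward direction, assume $\nu$ is a (p,q)-frame measure for $\mu$ with bounds $C$ and $D$. Chaining the sandwich above with the (p,q)-frame inequality for $\nu$ gives
\begin{align*}
AC\,\|f\|^q_{L^p(\mu)}\leq\|\widehat{fd\mu}\|^q_{L^q(\psi d\nu)}\leq BD\,\|f\|^q_{L^p(\mu)},
\end{align*}
so $\psi d\nu$ is a (p,q)-frame measure for $\mu$ with bounds $AC$ and $BD$. For the converse, if $\psi d\nu$ has frame bounds $C$ and $D$, dividing the sandwich through by $\psi$-bounds gives
\begin{align*}
B^{-1}\,\|\widehat{fd\mu}\|^q_{L^q(\psi d\nu)}\leq\|\widehat{fd\mu}\|^q_{L^q(\nu)}\leq A^{-1}\,\|\widehat{fd\mu}\|^q_{L^q(\psi d\nu)},
\end{align*}
and one obtains $\nu$ as a (p,q)-frame measure for $\mu$ with bounds $C/B$ and $D/A$.

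There is no real obstacle here; the statement is essentially just the pointwise two-sided bound on $\psi$ translated through an integral. The only point one must be careful about is the clause ``$\nu$-a.e.\ on $K_\nu$'': outside $K_\nu$ the measure $\nu$ vanishes, so any values of $\psi$ there are irrelevant and do not affect either integral, which is why the hypothesis is stated only on the support.
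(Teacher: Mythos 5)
Your proof is correct and is exactly the ``slight variation of Theorem \ref{thm3.7}'' that the paper has in mind when it leaves this proof to the reader: since the weight $\psi$ sits on the dual side, $\widehat{fd\mu}$ is unchanged and the two-sided bound $A\le\psi\le B$ passes directly through the integral $\int_{\widehat{G}}|\widehat{fd\mu}|^q\psi\,d\nu$, giving the stated equivalence with bounds $AC$, $BD$ in one direction and $C/B$, $D/A$ in the other. Your remark that the values of $\psi$ off $K_\nu$ are irrelevant because the complement of the support is $\nu$-null is the right way to handle the ``$\nu$-a.e.\ on $K_\nu$'' clause.
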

\begin{corollary}\label{cor3.9}
Let $\mu$ be a finite Borel measure on $G$ and $\nu$ be a Borel measure on $\widehat{G}$. Let $\phi, \psi$ be two measurable
functions on $G$ and $\widehat{G}$ such that \begin{align*} 0<A\leq\phi(x)\leq B<\infty,\hspace{0.5cm}\text{and}
\hspace{0.5cm}0<C\leq\psi(\gamma)\leq D<\infty, \end{align*} a.e. on $K_\mu$ and $K_\nu$, respectively.
Then $\nu$ is a (p, q)-frame measure for $\mu$ if and only if $\psi d\nu$ is a (p, q)-frame measure for $\phi d\mu$.
\end{corollary}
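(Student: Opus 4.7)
The plan is to deduce this corollary by a two-step chain of applications of Theorems \ref{thm3.7} and \ref{thm3.8}, treating $\phi$ and $\psi$ independently. The key observation is that Theorem \ref{thm3.7} transforms the measure on $G$ (replacing $\mu$ by $\phi\,d\mu$) while keeping the measure on $\widehat{G}$ fixed, and Theorem \ref{thm3.8} does the symmetric transformation on $\widehat{G}$ (replacing $\nu$ by $\psi\,d\nu$) while keeping the measure on $G$ fixed. Since both transformations are if-and-only-if statements and act on independent sides of the frame inequality \eqref{eq2.4}, they can be composed in either order.

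For the forward direction, I would first assume that $\nu$ is a (p,q)-frame measure for $\mu$ with bounds, say, $C_1$ and $D_1$. Applying Theorem \ref{thm3.7} with the bounds $A \leq \phi \leq B$, I get that $\nu$ is a (p,q)-frame measure for $\phi\,d\mu$ with new bounds $C_1 A^{q(1-1/p)}$ and $D_1 B^{q(1-1/p)}$. Next I would apply Theorem \ref{thm3.8} to this conclusion, using the bounds $C \leq \psi \leq D$, to obtain that $\psi\,d\nu$ is a (p,q)-frame measure for $\phi\,d\mu$. The backward direction is essentially the same argument run in reverse: assuming $\psi\,d\nu$ is a (p,q)-frame measure for $\phi\,d\mu$, Theorem \ref{thm3.8} gives that $\nu$ is a (p,q)-frame measure for $\phi\,d\mu$, and then Theorem \ref{thm3.7} yields that $\nu$ is a (p,q)-frame measure for $\mu$.

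Since Theorems \ref{thm3.7} and \ref{thm3.8} each contain a full if-and-only-if statement, one could equivalently write the argument as a single chain of equivalences
\begin{align*}
\nu \text{ is a (p,q)-frame measure for } \mu
&\;\Longleftrightarrow\; \nu \text{ is a (p,q)-frame measure for } \phi\,d\mu \\
&\;\Longleftrightarrow\; \psi\,d\nu \text{ is a (p,q)-frame measure for } \phi\,d\mu.
\end{align*}
There is no genuine obstacle here; the only mildly delicate point is bookkeeping the frame bounds through the two transformations, so that the final constants in terms of $A, B, C, D$ and the original bounds of $\nu$ for $\mu$ come out explicitly (this is why the author explicitly states the bound transformations inside Theorem \ref{thm3.7}). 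Because this is essentially a direct composition of two established equivalences, the author's choice to leave it to the reader is well justified.
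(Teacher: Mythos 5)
Your proposal is correct and matches the route the paper clearly intends: the statement is presented as a corollary of Theorems \ref{thm3.7} and \ref{thm3.8} with the proof left to the reader, and composing the two equivalences (noting that $\phi\,d\mu$ is again a finite Borel measure so Theorem \ref{thm3.8} applies to it) is exactly that argument. Your bookkeeping of the bounds through the two steps is also consistent with the constants appearing in the proof of Theorem \ref{thm3.7}.
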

The following theorem provides the connection between the existence of (p, q)-frame measure between the measures $\mu,
\lambda$, and the sum $\mu +\lambda$. This theorem is a generalization of [12, Lemma 2.1] to the situation of (p, q)-frame
measures. We will use it later.
\begin{theorem}\label{thm3.10}
Let $\mu, \lambda$ be two finite Borel measures on $G$ and $\nu$ be a Borel measure on $\widehat{G}$. Suppose
that $\mu(K_\lambda)=0$. If $\nu$ is a (p, q)-frame measure for $\mu+\lambda$, then $\nu$ is a (p, q)-frame measure for
$\mu$ and $\lambda$ with the same frame bounds. \end{theorem}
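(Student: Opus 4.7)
The plan is to exploit the hypothesis $\mu(K_\lambda)=0$ (combined with $\lambda(G\setminus K_\lambda)=0$, which is automatic from the definition of the closed support) to lift each function in $L^p(\mu)$ or $L^p(\lambda)$ to a function in $L^p(\mu+\lambda)$ whose $L^p$-norm and Fourier--Stieltjes transform against $\mu+\lambda$ agree with those of the original function. The two cases are then just a direct substitution into the frame inequality assumed for $\mu+\lambda$.

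For the $\mu$-case, given $f\in L^p(\mu)$, I would set $\tilde f := f\,\chi_{G\setminus K_\lambda}$ (choosing a Borel representative of $f$). Using $\mu(K_\lambda)=0$ I get $\int_G|\tilde f|^p\,d\mu=\int_G|f|^p\,d\mu$, and using $\lambda(G\setminus K_\lambda)=0$ I get $\int_G|\tilde f|^p\,d\lambda=0$. Consequently $\|\tilde f\|_{L^p(\mu+\lambda)}=\|f\|_{L^p(\mu)}$. The same two facts, applied to the integrand $f(x)\overline{\langle x,\gamma\rangle}$, give
\begin{align*}
\widehat{\tilde f\,d(\mu+\lambda)}(\gamma)
=\int_{G\setminus K_\lambda}f(x)\overline{\langle x,\gamma\rangle}\,d\mu(x)+0
=\widehat{f\,d\mu}(\gamma).
\end{align*}
Plugging $\tilde f$ into the $(p,q)$-frame inequality \eqref{eq2.4} for $\mu+\lambda$ then yields the same inequality for $\mu$ with identical bounds $A,B$.

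For the $\lambda$-case, the symmetric move is to take $g\in L^p(\lambda)$ and set $\tilde g:=g\,\chi_{K_\lambda}$. The hypothesis $\mu(K_\lambda)=0$ immediately kills both the $\mu$-norm contribution and the $\mu$-part of the transform:
\begin{align*}
\int_G|\tilde g|^p\,d\mu=0,\qquad
\int_{K_\lambda}g(x)\overline{\langle x,\gamma\rangle}\,d\mu(x)=0,
\end{align*}
while $\lambda$ being supported on $K_\lambda$ gives $\|\tilde g\|_{L^p(\mu+\lambda)}=\|g\|_{L^p(\lambda)}$ and $\widehat{\tilde g\,d(\mu+\lambda)}(\gamma)=\widehat{g\,d\lambda}(\gamma)$. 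Substituting into \eqref{eq2.4} for $\mu+\lambda$ finishes the $\lambda$-case with the same bounds.

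The only genuine subtlety, and the step I would write out carefully, is the measurability bookkeeping: equivalence classes in $L^p(\mu)$ and $L^p(\lambda)$ must be promoted to Borel representatives so that $\tilde f,\tilde g$ are unambiguously defined as Borel functions on $G$ and hence belong to $L^p(\mu+\lambda)$. Everything else is a two-line application of additivity of $\mu+\lambda$ on Borel sets together with the support identities $\mu(K_\lambda)=0$ and $\lambda(G\setminus K_\lambda)=0$; no nontrivial analytic inequalities are needed, and in particular the conjugate index $q$ plays no role beyond appearing as an exponent.
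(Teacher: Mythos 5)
Your proposal is correct and follows essentially the same route as the paper: both proofs extend a function in $L^p(\mu)$ (resp.\ $L^p(\lambda)$) to $L^p(\mu+\lambda)$ by setting it to zero on $K_\lambda$ (resp.\ off $K_\lambda$), using $\mu(K_\lambda)=0$ and $\lambda(G\setminus K_\lambda)=0$ to check that the $L^p$-norm and the transform $\widehat{\,\cdot\, d(\mu+\lambda)}$ are unchanged, and then substitute into the frame inequality. Your explicit remark about choosing Borel representatives is a point the paper passes over silently, but it does not alter the argument.
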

\begin{proof}
Let $\nu$ be a (p, q)-frame measure for $\mu+\lambda$ with frame bounds $A$ and $B$ and $\mu(K_\lambda)=0$.
For any $f\in L^p(\lambda)$, we may take $f(x)=0$ for all $x$ outside $K_\lambda$. Then \begin{align*} \int_G|f(x)|^p
d(\mu+\lambda)(x)&=\int_{K_\lambda}|f(x)|^pd\mu(x)+\int_G|f(x)|^pd\lambda(x)\\&=\int_G|f(x)|^pd\lambda(x)
\end{align*} This gives that $f\in L^p(\mu+\lambda)$ and $\|f\|_{L^p(\mu+\lambda)}=\|f\|_{L^p(\lambda)}$.
On the other hand, for every $\gamma\in\widehat{G}$ we have \begin{align*} \widehat{fd(\mu+\lambda)}(\gamma)
&=\int_G f(x)\overline{\langle x, \gamma\rangle}d(\mu+\lambda)(x)\\&=\int_{K_\lambda} f(x)
\overline{\langle x,\gamma\rangle}d\mu(x)+\int_G f(x)\overline{\langle x, \gamma\rangle}
d\lambda(x)\\&=\int_G f(x)\overline{\langle x, \gamma\rangle}d\lambda(x)=\widehat{fd\lambda}(\gamma). \end{align*}
This yields \begin{align*} A\|f\|^q_{L^p(\lambda)}&=A\|f\|^q_{L^p(\mu+\lambda)}\leq\int_{\widehat{G}}
\big|\widehat{fd(\mu+\lambda)}(\gamma)\big|^qd\nu(\gamma)\\&=\int_{\widehat{G}}\big|\widehat{fd\lambda}
(\gamma)\big|^qd\nu(\gamma)\leq B\|f\|^q_{L^p(\mu+\lambda)}=B\|f\|^q_{L^p(\lambda)}. \end{align*}
Furthermore, for any $f\in L^p(\mu)$, we define \begin{align*} g(x)=\left\{\begin{array}{ll} 0& x\in K_\lambda
\\ f(x) & x\in K_\mu\setminus K_\lambda. \end{array}\right. \end{align*} Then we have \begin{align*} \int_G|g(x)|^p
d(\mu+\lambda)(x)&=\int_{K_\mu\setminus K_\lambda}|f(x)|^pd\mu(x)+\int_{K_\lambda}|g(x)|^pd\lambda(x)
\\&=\int_{K_\mu\setminus K_\lambda}|f(x)|^pd\mu(x)\\&=\int_{K_\mu\setminus K_\lambda}|f(x)|^pd\mu(x)
+\int_{K_\lambda}|f(x)|^pd\mu(x)\\&=\int_G|f(x)|^pd\mu(x). \end{align*} This implies that $g\in L^p(\mu+\lambda)$
and $\|g\|_{L^p(\mu+\lambda)}=\|f\|_{L^p(\mu)}$. Moreover, for every $\gamma\in\widehat{G}$ we also have
$\widehat{gd(\mu+\lambda)}(\gamma)=\widehat{fd\mu}(\gamma)$. Therefore $\nu$ is a (p, q)-frame measure for
$\mu$ with frame bounds $A, B$. \end{proof}
In \cite{7}, Dutkay and Lai by considering certain absolute continuity properties of the measure and its translation recovered the characterization on absolutely continuous measures with Fourier frames. They proved that if a measure has an associated frame
measure, then it must have a certain uniformity in the sense that the weight is distributed quite uniformly on its support. In the
following, we recover this characterization for the (p, q)-frame measure situation. We will use the standard notation $\mu\ll\lambda$
to indicate that $\mu$ is absolutely continuous with respect to $\lambda$ and we use the notation $\frac{d\mu}{d\lambda}$ for
its Radon-Nikodym derivative if $\lambda$ is $\sigma$-finite.
\begin{definition}\label{def3.011}
Let $\mu$ be a Borel measure on LCA group $G$ and let $a\in G$ and $E\subset G$ be a Borel subset. We denote by $\mu|_E$
the restriction of $\mu$ to the set $E$ and $T_a\mu$ the translation by $a$ of $\mu$, i.e., for every Borel subset $F\subset G$
\begin{align*} \mu|_{E}(F)=\mu(E\cap F)\hspace{0.5cm}\text{and}\hspace{0.5cm}T_a\mu(F)=\mu(F+a).\end{align*} This
means that \begin{align*} \int_G f(x)dT_a(\mu|_E)(x)=\int_E f(x-a)d\mu(x),\end{align*} for all functions $f\in
L^1\big(T_a(\mu|_E)\big)$.
\end{definition}
\begin{theorem}\label{thm3.11}
Let $\mu$ be a finite Borel measure on LCA group $G$ and $\nu$ be a Borel measure on $\widehat{G}$, respectively. Let $\nu$
be a (p, q)-frame measure for $\mu$ with (p, q)-frame bounds $A, B>0$. Assume in addition that there exist a set $F\subset G$
with $\mu(F)>0$ and $a\in G$ such that $T_a(\mu|_{F+a})\ll\mu$. Then $\big\|\frac{dT_a(\mu|_{F+a})}{d\mu}
\big\|^{q(1-\frac{1}{p})}_\infty\leq\frac{B}{A}$.\end{theorem}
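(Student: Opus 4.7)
The plan is to manufacture a pair of test functions whose Fourier transforms agree in absolute value but whose $L^p(\mu)$-norms encode the Radon--Nikodym derivative $\phi:=\frac{dT_a(\mu|_{F+a})}{d\mu}$, and then play the upper and lower (p,q)-frame inequalities against each other.

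Concretely, for a bounded measurable $f\colon G\to\mathbb{C}$ of bounded support, chosen so that $\phi$ is bounded on $\operatorname{supp}(f)$, I introduce the companion
\begin{align*}
g(x):=f(x-a)\chi_{F+a}(x),
\end{align*}
so that $g,\, f\phi\in L^p(\mu)$. The central identity is $|\widehat{gd\mu}(\gamma)|=|\widehat{(f\phi)d\mu}(\gamma)|$ for every $\gamma\in\widehat{G}$. To prove it, I rewrite $\widehat{gd\mu}(\gamma)=\int_{F+a} h(x-a)\,d\mu(x)$ with $h(y):=f(y)\overline{\langle y+a,\gamma\rangle}$, apply Definition~\ref{def3.011} to convert this into $\int h\phi\,d\mu$, factor the character as $\langle y+a,\gamma\rangle=\langle y,\gamma\rangle\langle a,\gamma\rangle$, and pull out the unit-modulus scalar $\overline{\langle a,\gamma\rangle}$, which vanishes under $|\cdot|$. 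The same change-of-variable applied to $|f|^p$ yields $\|g\|_{L^p(\mu)}^p=\int |f|^p\phi\,d\mu$, while trivially $\|f\phi\|_{L^p(\mu)}^p=\int |f|^p\phi^p\,d\mu$.

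Applying the lower bound of the (p,q)-frame inequality \eqref{eq2.4} to $f\phi$, equality of Fourier norms through the identity above, and the upper bound to $g$, I obtain
\begin{align*}
A\|f\phi\|_{L^p(\mu)}^q\leq \|\widehat{(f\phi)d\mu}\|_{L^q(\nu)}^q=\|\widehat{gd\mu}\|_{L^q(\nu)}^q\leq B\|g\|_{L^p(\mu)}^q,
\end{align*}
which after substituting the expressions for the two norms rearranges to
\begin{align*}
\left(\frac{\int|f|^p\phi^p\,d\mu}{\int|f|^p\phi\,d\mu}\right)^{q/p}\leq\frac{B}{A}.
\end{align*}

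To harvest the essential supremum I specialize $f=\chi_{E_\epsilon}$ where $E_\epsilon\subset\{\|\phi\|_\infty-\epsilon\leq\phi\leq\|\phi\|_\infty\}$ has positive $\mu$-measure by the definition of the essential supremum (in the case $\|\phi\|_\infty=\infty$, one instead takes $E_N\subset\{N\leq\phi\leq 2N\}$ and lets $N\to\infty$ to derive a contradiction with the displayed inequality). On such a set $\phi$ is nearly constant equal to $\|\phi\|_\infty$, so the ratio on the left-hand side is at least $\bigl((\|\phi\|_\infty-\epsilon)^p/\|\phi\|_\infty\bigr)^{q/p}$, and letting $\epsilon\to 0$ yields $\|\phi\|_\infty^{q(p-1)/p}=\|\phi\|_\infty^{q(1-1/p)}\leq B/A$. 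The delicate step is the bookkeeping in the Fourier identity, where the character factor $\overline{\langle a,\gamma\rangle}$ absorbs the translation by $a$ and cancels under the modulus; once it is in place, everything else is a matter of routine estimation and extremization.
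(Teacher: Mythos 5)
Your proof is correct, and it rests on the same two pillars as the paper's: the translation identity $|\widehat{f(\cdot-a)\chi_{F+a}\,d\mu}(\gamma)|=|\widehat{(f\phi)\,d\mu}(\gamma)|$ (with the unimodular factor $\overline{\langle a,\gamma\rangle}$ cancelling under the modulus) and the choice of a characteristic function of a near-maximal set $\{M-\epsilon\le\phi\le M\}$ as the test function. Where you diverge is the endgame. The paper normalizes $g=\mu(E)^{-1/p}\chi_E$, approximates $g\phi$ by the constant multiple $Mg$ in $L^p(\mu)$ (error $\le\epsilon$), pushes that error through the upper Bessel bound to compare $\|\widehat{(Mg)d\mu}\|_{L^q(\nu)}$ with $\|\widehat{g(\cdot-a)d\mu}\|_{L^q(\nu)}$, and then extracts $M^{1-1/p}$ from a ratio of frame inequalities. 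You instead apply the lower frame bound directly to $f\phi$ and the upper bound to $g$, obtaining the clean intermediate inequality $\bigl(\int|f|^p\phi^p d\mu\,/\int|f|^p\phi\, d\mu\bigr)^{q/p}\le B/A$ valid for every admissible $f$, and only then specialize $f$. This buys you a shorter and arguably more transparent argument — no perturbation of Fourier transforms, no division by $\|\widehat{g(\cdot-a)d\mu}\|_{L^q(\nu)}$ — and the intermediate inequality is a genuinely more general statement that isolates where the essential supremum enters. Two small points to tidy up: (i) you should justify that $\phi<\infty$ $\mu$-a.e.\ (it is the density of a finite measure, so $\phi\in L^1(\mu)$), which is what guarantees that in the case $\|\phi\|_\infty=\infty$ some dyadic slice $\{2^kN<\phi\le 2^{k+1}N\}$ has positive measure; and (ii) the trivial case $\|\phi\|_\infty=0$ should be set aside so the denominator $\int|f|^p\phi\,d\mu$ is nonzero. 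Neither affects the substance.
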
\begin{proof}
Let $\phi =\frac{dT_a(\mu|_{F+a})}{d\mu}$ and let $M=\|\phi\|_{\infty}$. First, note that restricting $\mu$
to a subset of $F$, we can assume $1\leq M< \infty$. Let $f:G\rightarrow\mathbb{C}$ be a bounded function. Then we have
\begin{align*} \int_G f(x)\phi(x)d\mu(x)&=\int_G f(x)dT_a(\mu|_{F+a})(x)=\int_G f(x-a) d\mu|_{F+a}(x)\\&=
\int_{F+a} f(x-a) d\mu(x). \end{align*} Therefore the values of the functions $f$ and $\phi$ can be ignored outside $F$ and
so we can assume $f$ and $\phi$ are supported on $F$. Fix $\epsilon>0$. Since $M$ is the essential supremum of $\phi$,
there exists a subset $E\subset F$ with $\mu(E)>0$ such that $M-\epsilon\leq\phi\leq M$ on $E$. Now take $g:G\rightarrow
\mathbb{R}$ by $g=\frac{1}{\sqrt[p]{\mu(E)}}\chi_E$. We have $\|g\|_{L^p(\mu)}=1$ and \begin{align*}\|g(\cdot-a)\|^p
_{L^p(\mu)}&=\int_G |g(x-a)|^pd\mu(x)=\int_{F+a}|g(x-a)|^pd\mu(x)\\&=\int_G |g(x)|^p\phi(x)d\mu(x). \end{align*}
Thus $M-\epsilon\leq\|g(\cdot-a)\|^p_{L^p(\mu)}\leq M$. On the other hand, we have \begin{align*}\big|
\widehat{g(\cdot-a) d\mu}(\gamma)\big|&=\Big|\int_G g(x-a)\overline{\langle x, \gamma\rangle} d\mu(x)\Big|
=\Big|\int_G g(x-a)\overline{\langle x-a, \gamma\rangle}d\mu(x)\Big|\\&=\Big|\int_G g(x)\overline{\langle x, \gamma
\rangle}\phi(x)d\mu(x)\Big|=\big|\widehat{(g\phi) d\mu}(\gamma)\big|. \end{align*} Then $\|\widehat{g(\cdot-a)
d\mu}\|_{L^q(\nu)}=\|\widehat{(g\phi) d\mu}\|_{L^q(\nu)}$. We also have \begin{align*} \|Mg-g\phi\|^p_{L^p(\mu)}
&=\int_G|Mg(x)-g(x)\phi(x)|^pd\mu(x)\\&=\int_G|\phi(x)-M|^p|g(x)|^p d\mu(x)\leq\epsilon^p\|g\|^p_{L^p(\mu)}
=\epsilon^p. \end{align*} Then, using the upper (p, q)-frame bound, \begin{align*}\|\widehat{(Mg)d\mu}-\widehat{(g\phi)
d\mu}\|_{L^q(\nu)}=\|\widehat{(Mg-g\phi) d\mu}\|_{L^q(\nu)}\leq\sqrt[q]{B}\|Mg-g\phi\|_{L^p(\mu)}
\leq\sqrt[q]{B}\epsilon. \end{align*} This implies that \begin{align*}\|\widehat{(Mg)d\mu}\|_{L^q(\nu)}-\|\widehat{g(\cdot-a)
d\mu}\|_{L^q(\nu)}&=\|\widehat{(Mg)d\mu}\|_{L^q(\nu)}-\|\widehat{(g\phi)d\mu}\|_{L^q(\nu)}\\&\leq
\|\widehat{(Mg)d\mu}-\widehat{(g\phi) d\mu}\|_{L^q(\nu)}\leq\sqrt[q]{B}\epsilon. \end{align*} Finally, we have
\begin{align*}\frac{\sqrt[q]{A}}{\sqrt[q]{B}}M^{1-\frac{1}{p}}&=\frac{\sqrt[q]{A}M}{\sqrt[q]{B}\sqrt[p]{M}}\leq
\frac{\sqrt[q]{A}\|Mg\|_{L^p(\mu)}}{\sqrt[q]{B}\|g(\cdot-a)\|_{L^p(\mu)}}\leq\frac{\|\widehat{(Mg) d\mu}
\|_{L^q(\nu)}}{\|\widehat{g(\cdot-a) d\mu}\|_{L^q(\nu)}}\\&=1+\frac{ \|\widehat{(Mg) d\mu}
\|_{L^q(\nu)}-\|\widehat{g(\cdot-a) d\mu}\|_{L^q(\nu)}}{\|\widehat{g(\cdot-a) d\mu}\|_{L^q(\nu)}}\\&\leq
1+\frac{\sqrt[q]{B}\epsilon}{\sqrt[q]{A}\|g(\cdot-a)\|_{L^p(\mu)}}\leq 1+\frac{\sqrt[q]{B}\epsilon}{\sqrt[q]{A}
\sqrt[p]{M-\epsilon}}. \end{align*} Letting $\epsilon\to 0$ we get $M^{q(1-\frac{1}{p})}\leq\frac{B}{A}$.\end{proof}
The next proposition shows that the (p, q)-frame measure property is preserved under the restriction of measures on $G$.
We leave the proof to interested readers.
\begin{proposition}\label{pro3.12}
Let $\mu$ be a finite Borel measure on $G$ and $\nu$ be a Borel measure on $\widehat{G}$. Suppose that $E\subset G$
with $\mu(E)>0$. If $\nu$ is a (p, q)-frame measure for $\mu$, then $\nu$ is a (p, q)-frame measure for $\mu|_{E}$ with
the same (p, q)-frame bounds. \end{proposition}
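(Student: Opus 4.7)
The plan is to exploit the natural isometric embedding $L^p(\mu|_E) \hookrightarrow L^p(\mu)$ given by extension by zero, and to show that the Fourier-Stieltjes transform intertwines with this embedding.

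First I would take an arbitrary $f \in L^p(\mu|_E)$ and extend it to a function $\widetilde{f}$ on all of $G$ by setting $\widetilde{f}(x) = f(x)$ for $x \in E$ and $\widetilde{f}(x) = 0$ for $x \in G \setminus E$. Using the definition $\mu|_E(F) = \mu(E \cap F)$, a direct computation should give
\begin{align*}
\int_G |\widetilde{f}(x)|^p \, d\mu(x) = \int_E |f(x)|^p \, d\mu(x) = \int_G |f(x)|^p \, d\mu|_E(x),
\end{align*}
so that $\widetilde{f} \in L^p(\mu)$ with $\|\widetilde{f}\|_{L^p(\mu)} = \|f\|_{L^p(\mu|_E)}$.

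Next I would compute the Fourier-Stieltjes transforms against each measure. For every $\gamma \in \widehat{G}$, because $\widetilde{f}$ vanishes off $E$,
\begin{align*}
\widehat{\widetilde{f} d\mu}(\gamma) = \int_G \widetilde{f}(x)\overline{\langle x,\gamma\rangle}\, d\mu(x) = \int_E f(x)\overline{\langle x,\gamma\rangle}\, d\mu(x) = \widehat{f\, d\mu|_E}(\gamma).
\end{align*}
Consequently $\|\widehat{\widetilde{f} d\mu}\|_{L^q(\nu)} = \|\widehat{f\, d\mu|_E}\|_{L^q(\nu)}$.

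Finally, I would apply the $(p,q)$-frame inequality for $\mu$ to the extended function $\widetilde{f}$ and substitute the two identities above. This yields
\begin{align*}
A\|f\|^q_{L^p(\mu|_E)} = A\|\widetilde{f}\|^q_{L^p(\mu)} \leq \|\widehat{\widetilde{f} d\mu}\|^q_{L^q(\nu)} = \|\widehat{f\, d\mu|_E}\|^q_{L^q(\nu)} \leq B\|\widetilde{f}\|^q_{L^p(\mu)} = B\|f\|^q_{L^p(\mu|_E)},
\end{align*}
proving that $\nu$ is a $(p,q)$-frame measure for $\mu|_E$ with the same bounds $A,B$. There is no real obstacle here; the only thing to be slightly careful about is the definition of the restriction and the verification that extension by zero is a genuine isometry into $L^p(\mu)$, both of which follow immediately from the definitions.
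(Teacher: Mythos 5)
Your proof is correct, and since the paper explicitly leaves this proposition to the reader there is no written proof to compare against; your extension-by-zero argument is the natural one and mirrors the technique the authors themselves use in the proof of Theorem \ref{thm3.10} (where a function is extended by zero off $K_\mu\setminus K_\lambda$ before applying the frame inequality). The two identities $\|\widetilde{f}\|_{L^p(\mu)}=\|f\|_{L^p(\mu|_E)}$ and $\widehat{\widetilde{f}d\mu}=\widehat{f\,d\mu|_E}$ are exactly what is needed, and the bounds $A,B$ are preserved as claimed.
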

The following lemma provides the connection between two sets of positive Haar measure with their translation in $G$.
We will use it later.
\begin{lemma}\label{lem3.13}
Let $G$ be a $\sigma$-finite LCA group and let $X, Y\subset G$ be two sets of positive Haar measure. Then there
exists a subset $F\subset X$ of positive Haar measure and some $a\in G$ such that $F+a\subset Y$. \end{lemma}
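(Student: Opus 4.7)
The plan is to prove the lemma via a standard Steinhaus-type argument based on Fubini's theorem applied to the indicator function of the ``twisted product set'' $\{(x,a)\in G\times G : x\in X,\; x+a\in Y\}$.

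First I would reduce to the case where $\lambda_G(X)$ and $\lambda_G(Y)$ are both finite and positive. Since $G$ is a countable union of sets of finite Haar measure, any set of positive Haar measure contains a subset of positive finite Haar measure (take its intersection with one of the finite-measure pieces). So I replace $X$ and $Y$ by such subsets at the outset; a subset $F$ of the smaller $X$ still works for the original $X$.

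Next I would introduce the function $h\colon G\to[0,\infty]$ defined by
\begin{align*}
h(a) = \lambda_G\bigl(X\cap (Y-a)\bigr) = \int_G \chi_X(x)\chi_Y(x+a)\,d\lambda_G(x).
\end{align*}
The integrand is jointly measurable on $G\times G$ (it is the product of two measurable functions, and $(x,a)\mapsto x+a$ is continuous, hence measurable). Since $X$ and $Y$ have finite Haar measure, Fubini's theorem applies, and after switching the order of integration and invoking the translation invariance of $\lambda_G$, I obtain
\begin{align*}
\int_G h(a)\,d\lambda_G(a)
= \int_G \chi_X(x)\int_G \chi_Y(x+a)\,d\lambda_G(a)\,d\lambda_G(x)
= \lambda_G(X)\,\lambda_G(Y) > 0.
\end{align*}

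Consequently $h$ cannot be $\lambda_G$-a.e.\ zero, so there exists $a\in G$ with $h(a)>0$. Setting $F := X\cap(Y-a)$ gives a measurable subset of $X$ with $\lambda_G(F)=h(a)>0$, and for any $x\in F$ the definition forces $x\in Y-a$, i.e.\ $x+a\in Y$, so $F+a\subset Y$, as required. The only slightly delicate point is the joint measurability of $(x,a)\mapsto \chi_Y(x+a)$, which is what makes the $\sigma$-finiteness (and hence metrizability/second countability) hypothesis essential; otherwise Fubini might fail. All remaining manipulations are routine applications of the invariance of Haar measure.
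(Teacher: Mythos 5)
Your proof is correct. It is the same Steinhaus-type strategy as the paper's: both arguments boil down to showing that the function $a\mapsto\lambda_G\bigl(X\cap(Y\pm a)\bigr)=\chi_X\ast\chi_{-Y}(\mp a)$ is not identically zero and then taking $F$ to be the nonempty intersection. Where you differ is in how that nonvanishing is established. The paper applies the Fourier transform to the convolution and asserts $\widehat{\chi_X}\cdot\widehat{\chi_{-Y}}\neq 0$; as written this is slightly under-justified, since a product of two nonzero continuous functions could in principle vanish identically --- the clean way to see it is to evaluate at the trivial character, which gives exactly $\lambda_G(X)\lambda_G(Y)>0$. Your Tonelli computation of $\int_G h\,d\lambda_G=\lambda_G(X)\lambda_G(Y)$ is precisely that evaluation, done directly and without invoking the Fourier transform at all, so your route is both more elementary and, arguably, more airtight. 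Two small remarks: since the integrand is nonnegative, Tonelli applies on a $\sigma$-finite group without any need to first shrink $X$ and $Y$ to finite measure (though the reduction is harmless and guarantees the product $\lambda_G(X)\lambda_G(Y)$ is a genuine positive real rather than $+\infty$); and the role of $\sigma$-finiteness is to license Tonelli, not to secure joint measurability, which already follows from the continuity of addition together with the Borel measurability of $\chi_Y$.
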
\begin{proof}
Taking subsets $X, Y\subset G$ with positive Haar measure, we can assume $X$ and $Y$ are bounded. For each $z\in G$
we have \begin{align*} \chi_X\ast\chi_{-Y}(z)&=\int_G\chi_X(t)\chi_{-Y}(z-t)d\lambda_G(t)\\&=\int_G\chi_X(t)\chi_{Y+z}(t)
d\lambda_G(t)\\&=\lambda_G\big(X\cap(Y+z)\big),\end{align*} where $\lambda_G$ is the Haar measure on $G$.
Apply the Fourier transform on the function $\chi_X\ast\chi_{-Y}$, we have $\widehat{\chi_X\ast\chi_{-Y}}=\widehat{\chi_X}
\cdot\widehat{\chi_{-Y}}\neq 0$. This shows that $\chi_X\ast\chi_{-Y}\neq 0$. Therefore $\chi_X\ast\chi_{-Y}(a)\neq 0$ for
some $a\in G$. So $\lambda_G\big(X\cap(Y+a)\big)>0$. Let $F=X\cap(Y+a)$. Then $F-a\subset Y$ and this proves the the
lemma.\end{proof}
Let $\mu$ be a Borel measure on $G$. Let $f:G\rightarrow\mathbb{R}$ be a non-negative Borel measurable function.
We define the $\mu$-essential supremum of $f$ as follows:\begin{align*} \essup_\mu(f)=\inf\big\{M\in [0, \infty] :\; f\leq M,
\;\mu-\text{a.e.}\big\}. \end{align*} We also define the $\mu$-essential infimum of $f$ as follows: \begin{align*} \esinf_\mu(f)
=\sup\big\{m\in [0, \infty) :\; f\geq m,\;\mu-\text{a.e.}\big\}. \end{align*} \par
The next theorem generalizes a result of Dutkay, Lai \cite{7} to the situation of (p, q)-frame measure.
\begin{theorem}\label{thm3.14}
Let $G$ be a $\sigma$-finite LCA group. Let $\mu$ be a finite Borel measure on $G$ and $\nu$ be a
Borel measure on $\widehat{G}$, respectively. Suppose that $\mu=\phi d\lambda_G$ is an absolutely continuous measure
with respect to $\lambda_G$, where $\lambda_G$ is the Haar measure on $G$. If $\nu$ is a (p, q)-frame measure for
$\mu$ with (p, q)-frame bounds $A, B>0$, then \begin{align*}\frac{B}{A}\geq\Big(\frac{\essup_\mu(\phi)}{\esinf_\mu(\phi)}
\Big)^{q(1-\frac{1}{p})}. \end{align*} In particular, if $\essup_\mu(\phi)=\infty$ or $\esinf_\mu(\phi)=0$, then there is no
(p, q)-frame measure for $\mu$. \end{theorem}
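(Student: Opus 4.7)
The plan is to leverage Theorem \ref{thm3.11} by producing a specific set $F \subset G$ and translation $a \in G$ for which the Radon--Nikodym derivative $\frac{dT_a(\mu|_{F+a})}{d\mu}$ has $\mu$-essential supremum arbitrarily close to $\frac{\essup_\mu(\phi)}{\esinf_\mu(\phi)}$. Since Theorem \ref{thm3.11} bounds this quantity, raised to $q(1-\tfrac{1}{p})$, by $B/A$, the inequality in the theorem will follow. The overall strategy is to exploit the sharp contrast in $\phi$ between a region where it is small (controlling the denominator in the derivative) and a region where it is large (controlling the numerator).

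I would first fix parameters $M' < M := \essup_\mu(\phi)$ and $m' > m := \esinf_\mu(\phi)$, chosen with $m' > 0$. By the definitions of the $\mu$-essential supremum and infimum (and the fact that $\mu = \phi d\lambda_G$), the sets
\[
X := \{x \in G : \phi(x) > M'\} \quad\text{and}\quad Y := \{x \in G : 0 < \phi(x) < m'\}
\]
both have positive Haar measure. Lemma \ref{lem3.13} then supplies a Borel set $F \subset Y$ of positive Haar measure and an element $a \in G$ with $F + a \subset X$. Because $F \subset \{\phi > 0\}$, a straightforward change of variable yields $dT_a(\mu|_{F+a})(x) = \chi_F(x)\phi(x+a)\,d\lambda_G(x)$, which gives both absolute continuity $T_a(\mu|_{F+a}) \ll \mu$ and the explicit Radon--Nikodym derivative
\[
\frac{dT_a(\mu|_{F+a})}{d\mu}(x) = \chi_F(x)\,\frac{\phi(x+a)}{\phi(x)}.
\]

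The crucial observation is that for $\mu$-almost every $x \in F$, one has $\phi(x) < m'$ and $\phi(x+a) > M'$ by the very choice of $F$ and $F+a$, so the derivative exceeds $M'/m'$ on a set of positive $\mu$-measure. Thus its $\mu$-essential supremum is at least $M'/m'$, and Theorem \ref{thm3.11} forces
\[
\left(\frac{M'}{m'}\right)^{q(1-\frac{1}{p})} \leq \frac{B}{A}.
\]
Letting $M' \nearrow M$ and $m' \searrow m$ (with $m > 0$) yields the desired ratio inequality. For the "in particular" statement, if $M = \infty$ or $m = 0$, then $M'/m'$ can be driven to infinity while $B/A$ remains finite, a contradiction, so no (p,q)-frame measure exists for $\mu$.

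The main technical hurdle I anticipate is the bookkeeping around the Radon--Nikodym derivative on the support of $\mu$: one has to verify the absolute continuity $T_a(\mu|_{F+a}) \ll \mu$ carefully using $F \subset \{\phi > 0\}$, and then confirm that the $\mu$-essential supremum of a function supported on $F$ genuinely equals the $\mu|_F$-essential supremum of $\phi(\cdot + a)/\phi(\cdot)$, rather than being diluted by the zero values off $F$. Once the Radon--Nikodym computation is secured, the remainder is an appeal to Theorem \ref{thm3.11} and a limit argument.
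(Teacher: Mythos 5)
Your proposal is correct and follows essentially the same route as the paper's own proof: both use Lemma \ref{lem3.13} to produce a set $F$ of positive measure and a translation $a$ with $\phi$ near $\esinf_\mu(\phi)$ on $F$ and near $\essup_\mu(\phi)$ on $F+a$, identify the Radon--Nikodym derivative as $\chi_F\,\phi(\cdot+a)/\phi(\cdot)$, and then invoke Theorem \ref{thm3.11} before passing to the limit. The only (immaterial) difference is that you treat the degenerate cases $\essup_\mu(\phi)=\infty$ and $\esinf_\mu(\phi)=0$ directly with the same level-set construction, whereas the paper reduces them to the finite case via the restriction result (Proposition \ref{pro3.12}).
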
\begin{proof}
Let $M=\essup(\phi)$ and $m=\esinf(\phi)$. Suppose in the first part that $m>0$ and $M<\infty$. Fix
$\epsilon>0$. Then there exist sets of positive Haar measure $X, Y\subset G$ such that $m\leq \phi(x)\leq m+\epsilon$
for all $x\in X$, and $M-\epsilon\leq \phi(y)\leq M$ for all $y\in Y$. By the Lemma \ref{lem3.13} there exist a set $F$ of
positive Haar measure and some $a\in G$ such that for all $x\in F$ \begin{align*} m\leq \phi(x)\leq m+\epsilon,
\hspace{0.5cm}\text{and}\hspace{0.5cm}M-\epsilon\leq \phi(x)\leq M. \end{align*}
Now for every Borel subset $E$, we have \begin{align*} T_a(\mu|_{F+a})(E)&=\int_G\chi_E(x-a)d\mu|_{F+a}(x)\\&=
\int_G\chi_F(x-a)\chi_E(x-a)d\mu(x)\\&=\int_G\chi_F(x-a)\chi_E(x-a)\phi(x)d\lambda_G(x)\\&=\int_G\chi_F(x)\chi_E(x)
\phi(x+a)d\lambda_G(x)\\&=\phi(\cdot+a) d\lambda_G|_{F}(E)=\frac{\phi(\cdot+a)}{\phi(\cdot)}d\mu|_{F}(E).\end{align*}
Therefore \begin{align*} \Big\|\frac{dT_a(\mu|_{F+a})}{d\mu}\Big\|_\infty=\Big\|\frac{\phi(x+a)}{\phi(x)}|_{F}\Big\|_\infty
\geq\frac{M-\epsilon}{m+\epsilon}. \end{align*} By letting $\epsilon\to 0$ and using Theorem \ref{thm3.11} we get
\begin{align*} \frac{B}{A}\geq\Big\|\frac{dT_a(\mu|_{F+a})}{d\mu}\Big\|^{q(1-\frac{1}{p})}_\infty\geq
\Big(\frac{M}{m}\Big)^{q(1-\frac{1}{p})}=\Big(\frac{\essup_\mu(\phi)}{\esinf_\mu(\phi)}\Big)^{q(1-\frac{1}{p})}.
\end{align*}
Suppose for the second part that $M=\infty$. Then for any $N$ we can find a subset $F$ of positive Haar measure such that
\begin{align*} N\leq\essup(\phi|_F)<\infty,\hspace{0.5cm}\text{and}\hspace{0.5cm} 0< \esinf(\phi|_F)\leq L, \end{align*}
for some fixed $L$. By the Proposition \ref{pro3.12}, $\nu$ is a (p, q)-frame measure for $\mu|_{F}$ with the same
(p, q)-frame bounds. Then according to the first part argument we have $\frac{B}{A}\geq\big(\frac{N}{L}
\big)^{q(1-\frac{1}{p})}$.  Letting $N\to\infty$ we obtain a contradiction. A similar argument shows that $\esinf(\phi)>0$.
\end{proof}
In \cite{12}, Fu and Lai proved that if $\mu$ and $\lambda$ are two finite Borel measures without atoms and the pair
$(\mu, \lambda)$ is a packing pair of measures, then $\mu\ast\lambda+\delta_g\ast\mu$ does not admit any frame
measure. In the following, we show that these measures also do not admit any (p, q)-frame measure.
First, we collect the main results of the packing pair of measures from \cite{12}.
\begin{theorem}\label{thm3.15}\cite{12}
Let $\mu$ and $\lambda$ be two finite Borel measures without atoms (i.e. all points have measure zero) on $G$ and
$(\mu, \lambda)$ forms a packing pair. Then \begin{enumerate}\item[$(i)$] For any $x, y\in K_\lambda$ and $x\neq y$,
$(K_\mu+x) \cap(K_\lambda+y)=\emptyset$. \item[$(ii)$] For any $E\subset K_\mu$ and $F\subset K_\lambda$,
$\chi_{E+F}(x +y)=\chi_E(x)\chi_F(y)$ for all $x\in K_\mu$ and $y\in K_\lambda$. \item[$(iii)$] Suppose that
$\sigma=\mu\ast\lambda$. Then for any $E\subset K_\mu$ and $F\subset K_\lambda$, \begin{align*}
\widehat{\chi_{E+F}d\sigma}(\gamma)=\big(\widehat{(\chi_Ed\mu)\ast(\chi_Fd\lambda}\big)(\gamma)=
\widehat{\chi_Ed\mu}(\gamma)\widehat{\chi_Fd\lambda}(\gamma).\end{align*} In particular, $\sigma(E+F)
=\mu(E)\lambda(F)$. \item[$(iv)$] Suppose that $\sigma=\mu\ast\lambda$. Then for any $g\in G$,
$\sigma(K_\mu+g)=0$.\end{enumerate}
\end{theorem}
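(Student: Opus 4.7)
The plan is to leverage a single fundamental consequence of the packing pair hypothesis: every $z \in K_\mu + K_\lambda$ admits a \emph{unique} decomposition $z = x + y$ with $x \in K_\mu$ and $y \in K_\lambda$. Indeed, if $a + x = b + y$ with $a, b \in K_\mu$ and $x, y \in K_\lambda$, then $a - b = y - x$ lies in $(K_\mu - K_\mu) \cap (K_\lambda - K_\lambda) = \{0\}$, forcing $a = b$ and $x = y$. Part (i) is then an immediate restatement of this uniqueness: for $x \neq y$ in $K_\lambda$, a common point in the two translates would produce a nontrivial element of $(K_\mu - K_\mu) \cap (K_\lambda - K_\lambda)$. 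Part (ii) follows just as quickly, because for $x \in K_\mu$ and $y \in K_\lambda$ the uniqueness forces ``$x+y \in E+F$'' to be equivalent to ``$x \in E$ and $y \in F$'', giving $\chi_{E+F}(x+y) = \chi_E(x)\chi_F(y)$.

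For (iii), I would combine (ii) with Fubini's theorem on the finite product $\mu \otimes \lambda$ that defines $\sigma = \mu \ast \lambda$, together with the character identity $\langle x+y,\gamma\rangle = \langle x,\gamma\rangle\langle y,\gamma\rangle$:
\begin{align*}
\widehat{\chi_{E+F}\, d\sigma}(\gamma)
&= \int_G\!\int_G \chi_{E+F}(x+y)\,\overline{\langle x+y, \gamma\rangle}\, d\mu(x)\, d\lambda(y)\\
&= \int_G \chi_E(x)\,\overline{\langle x, \gamma\rangle}\, d\mu(x)\cdot \int_G \chi_F(y)\,\overline{\langle y, \gamma\rangle}\, d\lambda(y)\\
&= \widehat{\chi_E\, d\mu}(\gamma)\,\widehat{\chi_F\, d\lambda}(\gamma).
\end{align*}
Setting $\gamma = 0$ (the trivial character, where $\overline{\langle x,0\rangle}=1$) then recovers $\sigma(E+F) = \mu(E)\lambda(F)$.

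For (iv), I would expand by Fubini,
\begin{align*}
\sigma(K_\mu + g) = \int_{K_\lambda} \mu\bigl(K_\mu \cap (K_\mu + g - y)\bigr)\, d\lambda(y),
\end{align*}
and argue that the integrand vanishes for $\lambda$-almost every $y$. The integrand is zero unless $g - y \in K_\mu - K_\mu$, so only $y$ in $S := K_\lambda \cap \bigl(g - (K_\mu - K_\mu)\bigr)$ can contribute. The key observation, and the only place where the atomless hypothesis enters, is that $S$ contains at most one point: if $y_1, y_2 \in S$, then $y_2 - y_1 = (g - y_1) - (g - y_2) \in (K_\mu - K_\mu) \cap (K_\lambda - K_\lambda) = \{0\}$, so $y_1 = y_2$. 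Since $\lambda$ is atomless, $\lambda(S) = 0$ and the whole integral vanishes. The main technical obstacle I anticipate is verifying measurability of the slice function $y \mapsto \mu\bigl(K_\mu \cap (K_\mu + g - y)\bigr)$; this is standard for finite Borel measures on a second countable LCA group via a Fubini--Tonelli argument, so it presents no real difficulty.
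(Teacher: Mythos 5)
The paper does not prove this theorem at all --- it is quoted from \cite{FU} as background --- so your proposal stands or falls on its own. Parts (i)--(iii) are correct: the unique-decomposition consequence of $(K_\mu-K_\mu)\cap(K_\lambda-K_\lambda)=\{0\}$ is exactly the right engine, and the Fubini/character computation in (iii) is routine (the integrals concentrate on $K_\mu\times K_\lambda$, where (ii) applies). One remark on (i): as printed the conclusion is $(K_\mu+x)\cap(K_\lambda+y)=\emptyset$, which does not follow from the packing hypothesis; what your argument proves is $(K_\mu+x)\cap(K_\mu+y)=\emptyset$, which is surely the intended statement.

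Part (iv), however, has a genuine gap. You claim $S=K_\lambda\cap\bigl(g-(K_\mu-K_\mu)\bigr)$ is at most a singleton because $y_2-y_1=(g-y_1)-(g-y_2)$ lies in $(K_\mu-K_\mu)\cap(K_\lambda-K_\lambda)$. But $(g-y_1)-(g-y_2)$ is a difference of two elements of $K_\mu-K_\mu$, so it only lies in $(K_\mu-K_\mu)-(K_\mu-K_\mu)$, which is in general strictly larger than $K_\mu-K_\mu$; the packing hypothesis says nothing about that set. Concretely, in $G=\mathbb{R}^3$ take $K_\mu=[0,1]\times\{0,1\}\times\{0\}$ and $K_\lambda=\{0\}\times\{0,2\}\times[0,1]$ (each a pair of parallel unit segments carrying normalized length measure): this is an atomless packing pair, yet for $g=(0,1,0)$ the set $S$ contains both $(0,0,0)$ and $(0,2,0)$, and in fact both points even lie in $T=\{y:\mu(K_\mu\cap(K_\mu+g-y))>0\}$, so no singleton argument can work in your order of integration. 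The clean repair is to integrate in the \emph{other} order: for each fixed $x\in K_\mu$ the fibre $\{y\in K_\lambda : x+y\in K_\mu+g\}$ genuinely is at most a singleton, since $x+y_1=a_1+g$ and $x+y_2=a_2+g$ with $a_1,a_2\in K_\mu$ force $y_1-y_2=a_1-a_2\in(K_\mu-K_\mu)\cap(K_\lambda-K_\lambda)=\{0\}$; atomlessness of $\lambda$ then makes the inner integral $\int_{K_\lambda}\chi_{K_\mu+g}(x+y)\,d\lambda(y)$ vanish for every $x$, and Fubini finishes. Alternatively, keep your order but observe that the sets $A_y=K_\mu\cap(K_\mu+g-y)$ for $y\in T$ are pairwise disjoint (a common point $a$ would put both $a-g+y_1$ and $a-g+y_2$ in $K_\mu$, forcing $y_1=y_2$ as above); finiteness of $\mu$ then makes $T$ countable, and atomlessness of $\lambda$ gives $\lambda(T)=0$.
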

\begin{theorem}\label{thm3.17}
Let $\mu$ and $\lambda$ be two Borel probability measures without atoms on $G$ and let $(\mu, \lambda)$ be a
packing pair and $\sigma=\mu\ast\lambda$. Then for any $g\in G$, the measure $\rho_g=\sigma+\delta_g\ast\mu$
does not admit any (p, q)-frame measure.\end{theorem}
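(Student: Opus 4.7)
The plan is to show that if $\nu$ were a $(p,q)$-frame measure for $\rho_g$, the packing hypothesis would force enough constraints on $\nu$ to contradict the lower frame bound. First I would invoke the mutual singularity of the two summands supplied by Theorem \ref{thm3.15}(iv): $\sigma(K_\mu+g)=0$ is exactly $\sigma(K_{\delta_g\ast\mu})=0$, so Theorem \ref{thm3.10}, with the roles of $\mu$ and $\lambda$ there played by $\sigma$ and $\delta_g\ast\mu$, asserts that $\nu$ is a $(p,q)$-frame measure for each of $\sigma$ and $\delta_g\ast\mu$ with the same bounds $A,B$. Applying Theorem \ref{thm3.3} to the latter, $\nu$ is also a $(p,q)$-frame measure for $\mu$ with those same bounds.

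Next I would exploit the Fourier factorization from Theorem \ref{thm3.15}(iii). Fix Borel sets $E\subset K_\mu$ with $\mu(E)>0$ and $F\subset K_\lambda$ with $\lambda(F)>0$; then $\chi_{E+F}\in L^p(\sigma)$ satisfies
\begin{equation*}
\|\chi_{E+F}\|^p_{L^p(\sigma)}=\sigma(E+F)=\mu(E)\lambda(F),\qquad \widehat{\chi_{E+F}d\sigma}(\gamma)=\widehat{\chi_Ed\mu}(\gamma)\,\widehat{\chi_Fd\lambda}(\gamma).
\end{equation*}
Inserting $\chi_{E+F}$ into the lower frame inequality for $\sigma$, using the trivial bound $|\widehat{\chi_Fd\lambda}(\gamma)|\leq\lambda(F)$ to pull out the $F$-factor, and then the upper frame bound for $\mu$ to estimate $\int|\widehat{\chi_Ed\mu}|^q\,d\nu\leq B\mu(E)^{q/p}$, I obtain
\begin{equation*}
A(\mu(E)\lambda(F))^{q/p}\leq B\lambda(F)^q\mu(E)^{q/p},\qquad\text{equivalently}\qquad A\leq B\lambda(F)^{q(1-1/p)}.
\end{equation*}
Since $\lambda$ is atomless, $\lambda(F)$ can be chosen positive and arbitrarily small; since $p,q>1$ so $q(1-1/p)>0$, the right-hand side tends to $0$, contradicting $A>0$.

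The crucial observation is the choice of test function $\chi_{E+F}$: by the packing factorization its $L^q(\nu)$ norm decays like $\lambda(F)^q$ whereas its $L^p(\sigma)$ norm decays only like $\lambda(F)^{q/p}$, a rate mismatch that exists precisely because $p>1$. Recognizing and exploiting this gap is the real obstacle to anticipate; once it is in hand, the decoupling via Theorems \ref{thm3.10} and \ref{thm3.3} and the packing factorization in Theorem \ref{thm3.15}(iii) deliver the contradiction mechanically.
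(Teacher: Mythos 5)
Your proposal is correct and follows essentially the same route as the paper: both decouple $\rho_g$ via Theorem \ref{thm3.15}(iv) together with Theorems \ref{thm3.10} and \ref{thm3.3}, then test the lower bound for $\sigma$ against $\chi_{E+F}$, use the factorization $\widehat{\chi_{E+F}d\sigma}=\widehat{\chi_Ed\mu}\cdot\widehat{\chi_Fd\lambda}$ and the bound $|\widehat{\chi_Fd\lambda}|\leq\lambda(F)$ to reach $A\leq B\lambda(F)^{q(1-1/p)}$, and let $\lambda(F)\to 0$. The only cosmetic difference is that the paper takes $E=K_\mu$ and realizes the small sets $F$ as $U_n\cap K_\lambda$ for a shrinking neighborhood base at $0\in K_\lambda$, whereas you appeal directly to atomlessness to produce sets of arbitrarily small positive measure.
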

\begin{proof}
We also can assume that $0\in K_\lambda$. In other words, for any open ball $V$ centered around $0$, we have
$\lambda(V)>0$. We now show that $\rho_g$ does not admit any (p, q)-frame measure. Suppose on the
contrary that $\rho_g$ admits a (p, q)-frame measure $\nu$ with frame bounds $A, B$. Suppose that
$\{U_n\}_{n\in\mathbb{N}}$ is an arbitrary countable local base at $0$ with $U_{n+1}\subset U_n$ and
$\lambda (U_1)<\infty$. As $\lambda$ has no atoms, $\lambda(U_n)\to 0$. For $n\in\mathbb{N}$, take $V_n=K_\mu+(U_n\cap K_\lambda)$.
Then by $V_n\subset K_\sigma$ we have $\chi_{V_n}\in L^p(\sigma)$. Furthermore, by Theorem \ref{thm3.15} we have $\sigma(K_\mu+g)=0$.
So, as a consequence of Theorem \ref{thm3.3} and \ref{thm3.10}, $\nu$ is also a (p, q)-frame measure for $\sigma$ and $\mu$ with the same
frame bounds $A, B$. Therefore, we have \begin{align*} A\sigma(V_n)^{\frac{q}{p}}&=A\Big(\int_G
|\chi_{V_n}(x)|^pd\sigma(x)\Big)^{\frac{q}{p}}\leq\int_{\widehat{G}}\big|\widehat{\chi_{V_n}d\sigma}(\gamma)\big|^qd\nu(\gamma)
\\&=\int_{\widehat{G}}\big|\widehat{\chi_{K_\mu}d\mu}(\gamma)
\big|^q\big|\widehat{\chi_{U_n}d\lambda}(\gamma)\big|^qd\nu(\gamma)\\&\leq \big(\lambda(U_n)\big)^q
\int_{\widehat{G}}\big|\widehat{\chi_{K_\mu}d\mu}(\gamma)\big|^qd\nu(\gamma)\\&\leq B\big(\lambda(U_n)\big)^q \Big(\int_G |\chi_{K_\mu}(x)|^pd\mu(x)\Big)^{\frac{q}{p}}=B\big(\lambda(U_n)\big)^q. \end{align*}
Now, by Theorem \ref{thm3.15} we obtain \begin{align*} \sigma(V_n)=\mu(K_\mu)
\lambda(U_n\cap K_\lambda)=\lambda(U_n\cap K_\lambda)=\lambda(U_n). \end{align*} This implies that
\begin{align}\label{eq3.1} \frac{1}{\big(\lambda(U_n)\big)^{q(1-\frac{1}{p})}}\leq\frac{B}{A}. \end{align}
Finally, since $\lambda(U_n)\rightarrow 0$, it follows that the left hand side of (\ref{eq3.1}) can be made
arbitrarily large, while the right hand side remains bounded. This is a contradiction. \end{proof}

\section{Perturbations of (p, q)-frame measures}

The purpose of this section is to study the stability of (p, q)-frame measures. Similar to ordinary frames, (p, q)-frame measures
are stable under small perturbations. The following is an important result of the perturbation of operators that we will use in the
studying of the stability of frame measures.
\begin{proposition}\label{pro4.1}\cite{01}
Let $X, Y$ be two Banach spaces and let $U: X\rightarrow Y$ be a linear operator. Assume that there exist constants
$C, D\in [0; 1[$ such that \begin{align*} \|x-Ux\|\leq C\|x\|+D\|Ux\|,\hspace{0.7cm} \forall x\in X. \end{align*} Then
$U$ is invertible and \begin{align*} \frac{1-C}{1+D}\|x\|\leq\|Ux\|\leq \frac{1+C}{1-D}\|x\|,\hspace{0.3cm}\frac{1-D}{1+C}
\|x\|\leq\|U^{-1}x\|\leq \frac{1+D}{1-C}\|x\|,\hspace{0.3cm}\forall x\in X. \end{align*}
\end{proposition}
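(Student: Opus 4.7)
The plan is to treat this as a classical Casazza--Christensen-style perturbation lemma, whose core is extracting two complementary sharp inequalities from the single hypothesis $\|x-Ux\|\le C\|x\|+D\|Ux\|$ via the triangle inequality, and then translating these into invertibility together with the claimed operator-norm bounds.

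First I would establish the \emph{upper} bound on $\|Ux\|$. The forward triangle inequality gives $\|Ux\|\le\|x\|+\|x-Ux\|$; substituting the hypothesis yields $\|Ux\|\le(1+C)\|x\|+D\|Ux\|$, and since $D<1$ this rearranges to $\|Ux\|\le\frac{1+C}{1-D}\|x\|$. Hence $U$ is bounded. Next I would establish the \emph{lower} bound symmetrically: the reverse triangle inequality $\|x\|\le\|Ux\|+\|x-Ux\|$ combined with the hypothesis gives $(1-C)\|x\|\le(1+D)\|Ux\|$, i.e.\ $\|Ux\|\ge\frac{1-C}{1+D}\|x\|$. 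Since $C<1$, this shows $U$ is bounded below; in particular $U$ is injective and $U(X)$ is closed in $Y$.

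Next I would deduce invertibility and the bounds on $U^{-1}$. The bounded-below property already identifies $U$ as a topological isomorphism from $X$ onto its (closed) range, so the inverse on that range exists and inherits bounds directly: setting $y=Ux$ and reading the two derived inequalities as statements about $x=U^{-1}y$ gives
\begin{align*}
\frac{1-D}{1+C}\|y\|\le\|U^{-1}y\|\le\frac{1+D}{1-C}\|y\|,
\end{align*}
exactly as claimed. Surjectivity of $U$ onto all of $Y$ is then obtained by the standard Neumann-series construction applied to $U$ viewed as a perturbation of an invertible reference operator, as carried out in \cite{01}.

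The step I expect to be the main obstacle is precisely this last point: passing from ``bounded below with closed range'' to ``onto.'' In a general Banach-space setting this is not automatic from the pointwise inequality alone, and the argument in \cite{01} leverages the ambient structure in which $U$ is compared to an already-invertible operator (so that the Neumann-series expansion converges in the appropriate operator-norm topology). Everything else is a short rearrangement of the triangle inequality.
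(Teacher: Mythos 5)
The paper offers no proof of this proposition: it is imported verbatim from the Casazza--Christensen perturbation paper (the \verb|\cite{01}| in the source is a broken key for that reference), so there is nothing internal to compare against and your attempt has to stand on its own. The routine half of your argument is correct: the triangle inequality plus the hypothesis gives $\|Ux\|\le\frac{1+C}{1-D}\|x\|$ and $\|Ux\|\ge\frac{1-C}{1+D}\|x\|$, and substituting $y=Ux$ converts these into the stated two-sided bounds for $U^{-1}$ on the range. (A defect you inherited from the paper: with $U:X\to Y$ and $Y\neq X$ the expression $\|x-Ux\|$ is undefined, so the statement must be read with $Y=X$.)

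The genuine gap is surjectivity, which you correctly identify as the main obstacle but then do not prove; worse, the mechanism you gesture at would fail. From the hypothesis one gets only $(1-D)\|x-Ux\|\le (C+D)\|x\|$, i.e. $\|I-U\|\le\frac{C+D}{1-D}$, and this bound is $\ge 1$ whenever $C+2D\ge 1$ (e.g. $C=0$, $D=3/5$), so the Neumann series for $U=I-(I-U)$ need not converge; the Riesz-lemma/Hahn--Banach argument for density of the range hits the same constant $\frac{C+D}{1-D}$ and fails on the same parameter range. What actually works --- and is, in substance, the argument in the cited source --- is a continuity method: set $U_t=(1-t)I+tU$, verify that each $U_t$ satisfies an inequality of the same form with constants $\mu_1(t)=\frac{tC}{1-(1-t)D}$ and $\mu_2(t)=\frac{tD}{1-(1-t)D}$, both in $[0,1)$, so that all the $U_t$ are bounded below by a constant independent of $t$; then invertibility propagates from $U_0=I$ to $U_1=U$ in finitely many genuinely small Neumann-series steps, since $\|U_s-U_t\|=|s-t|\,\|I-U\|$ can be made smaller than the uniform lower bound. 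Without some such argument you have only shown that $U$ is an isomorphism onto its closed range, which is strictly weaker than the proposition and insufficient for its use later in the paper (Theorem \ref{thm4.6} applies $(T_{\mu,\rho}S^{-1}V)^{-1}$ to an arbitrary $f\in L^2(\mu)$, which requires surjectivity).
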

We start with the definition of a synthesis operator for a (p, q)-Bessel measure. To show that the inverse Fourier transform
appearing in this formula is well-defined, we need the next proposition.
\begin{proposition}\label{pro4.001}
Let $1<p, q<\infty$ and let $\mu$ be a finite Borel measure on LCA group $G$ and $\nu$ be a Borel measure on
$\widehat{G}$. Suppose that $\nu$ is a (p, q)-Bessel measure for $\mu$. Then for each $\varphi\in L^{q'}(\nu)$, the
mapping $x\mapsto\widecheck{\varphi d\nu}(x)$ is a well-defined from $G$ to $\mathbb{C}$ and $\widecheck{\varphi
d\nu}\in L^{p'}(\mu)$, where $p', q'$ are the conjugate exponents for $p, q$, respectively.
\end{proposition}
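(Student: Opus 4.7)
The plan is to recognize $\widecheck{\varphi d\nu}$ as the image of $\varphi$ under the Banach-space adjoint of the analysis operator, thereby reducing the claim to a duality argument. First I would introduce the analysis operator $T\colon L^p(\mu)\to L^q(\nu)$, $Tf=\widehat{fd\mu}$; the Bessel hypothesis is exactly the statement that $T$ is a bounded linear map with $\|T\|\leq B^{1/q}$. Since $1<p,q<\infty$ both $L^p(\mu)$ and $L^q(\nu)$ are reflexive, so the Banach adjoint $T^{*}\colon L^{q'}(\nu)\to L^{p'}(\mu)$ exists and satisfies $\|T^{*}\|\leq B^{1/q}$.

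Next I would identify $T^{*}\varphi$ with $\widecheck{\varphi d\nu}$ on a dense subclass. By Proposition \ref{pro3.1}, $\nu$ is locally finite, hence $\sigma$-finite on $\widehat{G}$, so $L^{q'}(\nu)\cap L^{\infty}(\nu)$-functions with $\nu$-finite support form a dense subspace of $L^{q'}(\nu)$. For such a $\varphi$, the integrand $\gamma\mapsto\varphi(\gamma)\langle x,\gamma\rangle$ is $\nu$-integrable for every $x\in G$ since $|\langle x,\gamma\rangle|=1$, so $\widecheck{\varphi d\nu}(x)$ is genuinely pointwise defined. Moreover, for any bounded $f\colon G\to\mathbb{C}$ (which lies in $L^{p}(\mu)$ because $\mu$ is finite), the double integral
\[
\int_{G}\int_{\widehat{G}}|f(x)||\varphi(\gamma)|\,d\nu(\gamma)\,d\mu(x)\leq\|f\|_{\infty}\mu(G)\|\varphi\|_{L^{1}(\nu)}
\]
is finite, so Fubini's theorem yields
\[
\int_{\widehat{G}}\widehat{fd\mu}(\gamma)\,\overline{\varphi(\gamma)}\,d\nu(\gamma)=\int_{G}f(x)\,\overline{\widecheck{\varphi d\nu}(x)}\,d\mu(x).
\]
Because bounded $f$ are dense in $L^{p}(\mu)$, this identifies $T^{*}\varphi=\widecheck{\varphi d\nu}$ in $L^{p'}(\mu)$, and combining with H\"older's inequality gives $\|\widecheck{\varphi d\nu}\|_{L^{p'}(\mu)}\leq B^{1/q}\|\varphi\|_{L^{q'}(\nu)}$.

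Finally, for a general $\varphi\in L^{q'}(\nu)$ I would approximate by $\varphi_n$ from the above subclass in $L^{q'}(\nu)$-norm and define $\widecheck{\varphi d\nu}:=T^{*}\varphi\in L^{p'}(\mu)$ as the $L^{p'}(\mu)$-limit of $\widecheck{\varphi_n d\nu}$, which exists by continuity of $T^{*}$. Passing to a subsequence gives $\mu$-almost everywhere convergence, which shows that the pointwise formula for $\widecheck{\varphi d\nu}(x)$ is consistent with this limit wherever it converges. The main subtlety is that an arbitrary $\varphi\in L^{q'}(\nu)$ need not lie in $L^{1}(\nu)$, so the defining integral can fail to converge absolutely at individual points $x$; the conclusion ``well-defined from $G$ to $\mathbb{C}$'' must therefore be read in the $\mu$-a.e.\ sense produced by the density argument, and the strength of the proposition lies in the uniform estimate $\|\widecheck{\varphi d\nu}\|_{L^{p'}(\mu)}\leq B^{1/q}\|\varphi\|_{L^{q'}(\nu)}$ that it delivers.
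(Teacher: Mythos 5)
Your proof follows essentially the same route as the paper's: both identify $\widecheck{\varphi d\nu}$ with the adjoint of the analysis operator $f\mapsto\widehat{fd\mu}$ and obtain the bound $\|\widecheck{\varphi d\nu}\|_{L^{p'}(\mu)}\leq B^{1/q}\|\varphi\|_{L^{q'}(\nu)}$ from $\|T^{*}\|\leq\|T\|$ via the Fubini duality identity. The only difference is that you justify the interchange of integrals by first working with bounded $\varphi$ of finite $\nu$-measure support and bounded $f$ and then extending by density, whereas the paper applies Fubini directly to arbitrary $\varphi\in L^{q'}(\nu)$ and $f\in L^{p}(\mu)$, where absolute convergence of the double integral is not guaranteed; your version is therefore the more careful one, and your closing caveat that for general $\varphi$ the pointwise formula is only meaningful $\mu$-a.e.\ flags a genuine issue that the paper's proof leaves unaddressed.
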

\begin{proof} Assume that $\nu$ is a (p, q)-Bessel measure for $\mu$ with Bessel bound $B$. Define the operator \begin{align*}
U:L^p(\mu)\rightarrow L^q(\nu),\hspace{1cm} U(f)=\widehat{fd\mu},\hspace{0.5cm}\forall f\in L^p(\mu).\end{align*}
Then $U$ is a linear bounded operator with $\|U\|\leq\sqrt[q]{B}$, so the adjoint operator \begin{align*} U^*:L^{q'}(\nu)
\rightarrow L^{p'}(\mu), \end{align*} is also a linear bounded operator with $\|U^*\|\leq\sqrt[q]{B}$. In order to find the
expression for $U^*$ let $\varphi\in L^{q'}(\nu)$ and $f\in L^p(\mu)$. Then by the Fubini's theorem we have \begin{align*}
U^*(\varphi)(f)&=\int_{\widehat{G}} \widehat{fd\mu}(\gamma)\overline{\varphi(\gamma)}d\nu(\gamma)\\&=
\int_{\widehat{G}}\int_G\overline{\varphi(\gamma)} f(x) \overline{\langle x, \gamma\rangle}d\mu(x)d\nu(\gamma) \\&=
\int_G f(x)\overline{\int_{\widehat{G}}\varphi(\gamma)\langle x, \gamma\rangle d\nu(\gamma)}d\mu(x)\\&=\int_G f(x)
\overline{\widecheck{\varphi d\nu}(x)}d\mu(x). \end{align*} Thus  \begin{align*} \|\widecheck{\varphi d\nu}\|_{L^{p'}(\mu)}
=\|U^*(\varphi)\|_{L^{p'}(\mu)}\leq\sqrt[q]{B}\|\varphi\|_{L^{q'}(\nu)}.\end{align*}
\end{proof}
\begin{corollary}\label{cor4.0001}
Assume the conditions of Proposition \ref{pro4.001}. Then \begin{align*} \langle\widehat{fd\mu}, \varphi\rangle = \langle f,
\widecheck{\varphi d\nu}\rangle,\hspace{5mm} \forall f\in L^p(\mu),\;\;\; \varphi\in L^{q'}(\nu),\end{align*} where $q'$ is the
conjugate exponent to $q$.
\end{corollary}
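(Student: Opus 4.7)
The plan is to recognize the identity as the standard adjoint relation for the analysis operator already introduced in Proposition \ref{pro4.001}. Define $U : L^p(\mu) \to L^q(\nu)$ by $U(f) = \widehat{fd\mu}$, which is bounded with norm at most $\sqrt[q]{B}$ by the Bessel hypothesis. Its Banach-space adjoint $U^* : L^{q'}(\nu) \to L^{p'}(\mu)$ then automatically satisfies $\langle Uf, \varphi\rangle = \langle f, U^*\varphi\rangle$ by definition of the adjoint. So the whole task reduces to verifying that the pointwise formula for $U^*\varphi$ agrees with $\widecheck{\varphi d\nu}$.

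Next, I would carry out the identification of $U^*\varphi$ with $\widecheck{\varphi d\nu}$, which is exactly the computation already recorded in the proof of Proposition \ref{pro4.001}. Starting from
\begin{align*}
\langle Uf, \varphi\rangle = \int_{\widehat{G}} \widehat{fd\mu}(\gamma)\,\overline{\varphi(\gamma)}\,d\nu(\gamma),
\end{align*}
I would substitute the integral definition of $\widehat{fd\mu}(\gamma)$ and exchange the order of integration via Fubini's theorem. The inner integral in $\gamma$ produces $\overline{\widecheck{\varphi d\nu}(x)}$, so the pairing equals $\int_G f(x)\,\overline{\widecheck{\varphi d\nu}(x)}\,d\mu(x) = \langle f, \widecheck{\varphi d\nu}\rangle$, which is precisely the claimed identity.

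The only technical obstacle is the justification of Fubini's theorem when $\nu$ is not globally finite. For $\varphi$ that is bounded and compactly supported on $\widehat{G}$, the double integral of $|f(x)\varphi(\gamma)\langle x,\gamma\rangle|$ against $\mu\otimes\nu$ is finite since $\mu(G) < \infty$, $|\langle x,\gamma\rangle| = 1$, and $\nu$ is finite on compact sets by Proposition \ref{pro3.1}; hence Fubini applies directly on this dense subspace. A routine density argument then extends the identity to every $\varphi \in L^{q'}(\nu)$: the left-hand side of the desired identity is continuous in $\varphi$ because $\langle \widehat{fd\mu},\cdot\rangle$ is a bounded functional on $L^{q'}(\nu)$ (the Bessel bound gives $\|\widehat{fd\mu}\|_{L^q(\nu)} \le \sqrt[q]{B}\,\|f\|_{L^p(\mu)}$), while the right-hand side is continuous in $\varphi$ because $\varphi \mapsto \widecheck{\varphi d\nu}$ is bounded from $L^{q'}(\nu)$ to $L^{p'}(\mu)$ with norm at most $\sqrt[q]{B}$, as already established in Proposition \ref{pro4.001}. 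Both sides thus agree on a dense subspace of $L^{q'}(\nu)$ and depend continuously on $\varphi$, so they agree everywhere.
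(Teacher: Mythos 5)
Your proposal is correct and follows essentially the same route as the paper: the corollary is just a restatement of the identity $U^*(\varphi)(f)=\int_{\widehat{G}}\widehat{fd\mu}(\gamma)\overline{\varphi(\gamma)}\,d\nu(\gamma)=\int_G f(x)\overline{\widecheck{\varphi d\nu}(x)}\,d\mu(x)$ already derived via Fubini in the proof of Proposition \ref{pro4.001}, which the paper takes as immediate. Your extra step of justifying Fubini on bounded compactly supported $\varphi$ (using Proposition \ref{pro3.1}) and extending by density is a welcome refinement of a point the paper glosses over, but it does not change the argument.
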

\begin{definition}\label{def4.2}
Let $\nu$ be a (p, q)-Bessel measure for $\mu$. The analysis operator for $\mu$ and $\nu$ is defined by\begin{align*}
U_{\mu, \nu}:L^p(\mu)\rightarrow L^q(\nu), \hspace{1cm} U_{\mu, \nu}(f)=\widehat{fd\mu}
\quad\forall f\in L^p(\mu). \end{align*}
\end{definition}
\begin{definition}\label{def4.3}
Let $\nu$ be a (p, q)-Bessel measure for $\mu$. The synthesis operator for $\mu$ and $\nu$ is defined by\begin{align*}
T_{\mu, \nu}:L^{q'}(\nu)\rightarrow L^{p'}(\mu), \hspace{1cm} T_{\mu, \nu}(\varphi)=\widecheck{\varphi d\nu}
\quad\forall \varphi\in L^{q'}(\nu), \end{align*} where $p', q'$ are the conjugate exponents for $p, q$, respectively.
\end{definition}
The following results will provide us with concrete formulas for the (p, q)-Bessel/frame measure operators.
\begin{corollary}\label{cor4.4}
Let $\nu$ be a (p, q)-Bessel measure for $\mu$. Then $U^*_{\mu, \nu}=T_{\mu, \nu}$. \end{corollary}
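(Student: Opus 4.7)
The plan is to unpack the definitions and invoke the duality identity already established in Corollary 4.0001 (which in turn rests on the Fubini computation in Proposition 4.001). Since $U_{\mu,\nu}\colon L^p(\mu)\to L^q(\nu)$ is a bounded linear operator between reflexive Banach spaces, its Banach-space adjoint is the unique operator $U^*_{\mu,\nu}\colon L^{q'}(\nu)\to L^{p'}(\mu)$ satisfying
\begin{equation*}
\langle U_{\mu,\nu}(f),\varphi\rangle \;=\; \langle f,\,U^*_{\mu,\nu}(\varphi)\rangle,\qquad \forall f\in L^p(\mu),\ \varphi\in L^{q'}(\nu),
\end{equation*}
where the brackets denote the canonical dual pairings between $L^p(\mu)$ and $L^{p'}(\mu)$, and between $L^q(\nu)$ and $L^{q'}(\nu)$.

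First I would rewrite the left-hand side using the definition of $U_{\mu,\nu}$, obtaining $\langle \widehat{fd\mu},\varphi\rangle$. By Corollary 4.0001 this equals $\langle f,\widecheck{\varphi d\nu}\rangle$, and by Definition 4.3 the right factor is precisely $T_{\mu,\nu}(\varphi)$. So we arrive at
\begin{equation*}
\langle U_{\mu,\nu}(f),\varphi\rangle \;=\; \langle f,\,T_{\mu,\nu}(\varphi)\rangle,\qquad \forall f\in L^p(\mu),\ \varphi\in L^{q'}(\nu).
\end{equation*}
By the uniqueness of the adjoint (which follows from the fact that two elements of $L^{p'}(\mu)$ inducing the same functional on $L^p(\mu)$ must coincide), this identity forces $U^*_{\mu,\nu}(\varphi)=T_{\mu,\nu}(\varphi)$ for every $\varphi\in L^{q'}(\nu)$, which is the desired equality of operators.

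There is essentially no obstacle here: the heavy lifting, namely showing that $\widecheck{\varphi d\nu}$ is well-defined $\mu$-a.e.\ and belongs to $L^{p'}(\mu)$, plus the Fubini interchange needed to produce the duality identity, was already carried out in Proposition 4.001. The only point to be careful about is that one uses the genuine Banach-space pairing (integral against the complex conjugate) consistently, so that complex conjugation on $\varphi$ lands on $\widecheck{\varphi d\nu}$ rather than on $\varphi$ itself; this is exactly how the computation was arranged in Proposition 4.001, so the verification reduces to citing the two previous results and invoking uniqueness of the adjoint.
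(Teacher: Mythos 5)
Your argument is correct and is essentially the paper's own: the content of Corollary \ref{cor4.4} is already contained in the Fubini computation of Proposition \ref{pro4.001} (restated as Corollary \ref{cor4.0001}), and identifying $\widecheck{\varphi d\nu}$ with $T_{\mu,\nu}(\varphi)$ together with uniqueness of the adjoint is exactly the intended reasoning. Nothing is missing.
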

\begin{corollary}\label{cor4.5}\textsl{•}
Let $\nu$ be a frame measure for $\mu$ with frame bounds $A$ and $B$. Then the operator $T_{\mu, \nu}T^*_{\mu, \nu}$
is invertible and \begin{align*} A\;\textsl{Id}_{L^2(\mu)}\leq T_{\mu, \nu}T^*_{\mu, \nu}\leq B\;\textsl{Id}_{L^2(\mu)}.
\end{align*}\end{corollary}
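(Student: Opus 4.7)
The plan is to unpack the definitions and rewrite $T_{\mu,\nu}T^*_{\mu,\nu}$ as $U^*_{\mu,\nu}U_{\mu,\nu}$, after which the frame inequality becomes exactly the operator inequality we want, and invertibility follows from the standard fact that a positive self-adjoint operator bounded below by a positive constant on a Hilbert space is invertible.

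In more detail: since $\nu$ is a frame measure for $\mu$ (the $p=q=2$ case of Definition \ref{def2.5}), both conjugate exponents $p'$ and $q'$ equal $2$, so the synthesis operator $T_{\mu,\nu}$ acts from $L^2(\nu)$ to $L^2(\mu)$ and the analysis operator $U_{\mu,\nu}$ acts from $L^2(\mu)$ to $L^2(\nu)$. First I would invoke Corollary \ref{cor4.4} to assert $U^*_{\mu,\nu}=T_{\mu,\nu}$; taking adjoints again gives $T^*_{\mu,\nu}=U^{**}_{\mu,\nu}=U_{\mu,\nu}$ (both operators are bounded between Hilbert spaces). Therefore
\begin{align*}
T_{\mu,\nu}T^*_{\mu,\nu}=U^*_{\mu,\nu}U_{\mu,\nu},
\end{align*}
which is a positive self-adjoint operator on $L^2(\mu)$.

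Next, for any $f\in L^2(\mu)$ I would compute
\begin{align*}
\langle T_{\mu,\nu}T^*_{\mu,\nu}f,f\rangle_{L^2(\mu)}=\langle U_{\mu,\nu}f,U_{\mu,\nu}f\rangle_{L^2(\nu)}=\|\widehat{fd\mu}\|^2_{L^2(\nu)},
\end{align*}
so the frame-measure bounds from Definition \ref{def2.5} translate directly into
\begin{align*}
A\langle f,f\rangle_{L^2(\mu)}\leq\langle T_{\mu,\nu}T^*_{\mu,\nu}f,f\rangle_{L^2(\mu)}\leq B\langle f,f\rangle_{L^2(\mu)},
\end{align*}
which is precisely the claimed operator inequality $A\,\textsl{Id}_{L^2(\mu)}\leq T_{\mu,\nu}T^*_{\mu,\nu}\leq B\,\textsl{Id}_{L^2(\mu)}$.

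Finally, for invertibility, I would note that the lower bound implies $\|T_{\mu,\nu}T^*_{\mu,\nu}f\|_{L^2(\mu)}\geq A\|f\|_{L^2(\mu)}$ by Cauchy--Schwarz, so $T_{\mu,\nu}T^*_{\mu,\nu}$ is injective with closed range; because it is self-adjoint, its range is dense and hence all of $L^2(\mu)$, giving a bounded inverse by the open mapping theorem. There is essentially no obstacle here: the entire statement is a formal translation between the quadratic-form version of the frame inequality and the operator-inequality form, once one has Corollary \ref{cor4.4} identifying $T^*_{\mu,\nu}$ with $U_{\mu,\nu}$. The only mild care required is noting that in the Hilbert-space case $p'=q'=2$, so the synthesis and analysis operators live on the same pair of spaces as their adjoints.
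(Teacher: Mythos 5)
Your proof is correct. The paper states this corollary without proof (it is offered as an immediate consequence of Proposition \ref{pro4.001} and Corollary \ref{cor4.4}), and your argument --- identifying $T_{\mu,\nu}T^{*}_{\mu,\nu}$ with $U^{*}_{\mu,\nu}U_{\mu,\nu}$ so that the $p=q=2$ frame inequality becomes exactly the quadratic-form statement $A\langle f,f\rangle\leq\langle T_{\mu,\nu}T^{*}_{\mu,\nu}f,f\rangle\leq B\langle f,f\rangle$, then deducing invertibility from the lower bound together with self-adjointness and the open mapping theorem --- is precisely the standard route the authors intend.
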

First, we derive the following result about the stability of frame measures under small perturbations of the associated Borel
measures on $\widehat{G}$.
\begin{theorem}\label{thm4.6}
Let $\mu$ be a finite Borel measure on LCA group $G$ and $\nu, \rho$ be two Borel measures on $\widehat{G}$, respectively.
Let $\nu$ be a frame measure for $\mu$ with frame bounds $A$ and $B$ and $S: L^2(\rho)\rightarrow L^2(\nu)$ be a bounded
invertible operator. Choose $C, D, M\geq 0$ such that $\max\big\{C+\frac{M}{\sqrt{A}\|S^{-1}\|^{-1}}, D\big\}<1$. Further, let
\begin{equation}\label{eq4.1} \|\widecheck{S(\varphi)d\nu}-\widecheck{\varphi d\rho}\|_{L^2(\mu)}\leq C\|\widecheck{S(\varphi)d\nu}\|_{L^2(\mu)}+D\|\widecheck{\varphi d\rho}\|_{L^2(\mu)}+M\|\varphi\|_{L^2(\rho)},\end{equation}
for all $\varphi\in L^2(\rho)$. Then $\rho$ is a frame measure for $\mu$ with frame bounds\begin{align*} A\|S^{-1}\|^{-2}
\Big(1-\frac{C+D+\frac{M}{\sqrt{A}\|S^{-1}\|^{-1}}}{1+D}\Big)^2\hspace{0.2cm} \text{and}\hspace{0.2cm} B\|S\|^2
\Big(1+\frac{C+D+\frac{M}{\sqrt{B}\|S\|}}{1-D}\Big)^2.\end{align*}
\end{theorem}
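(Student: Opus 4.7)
The plan is to recast the hypothesis (\ref{eq4.1}) in operator form using Definitions \ref{def4.2}--\ref{def4.3} and then invoke Proposition \ref{pro4.1}. With $p=q=2$, Corollary \ref{cor4.4} gives $U_{\mu,\nu}^{\ast}=T_{\mu,\nu}$, and Corollary \ref{cor4.5} shows that the frame operator $\Lambda_{\nu}:=T_{\mu,\nu}U_{\mu,\nu}$ satisfies $A\,\textsl{Id}\leq\Lambda_{\nu}\leq B\,\textsl{Id}$, hence is invertible on $L^{2}(\mu)$ with $\|\Lambda_{\nu}^{-1}\|\leq 1/A$. The perturbation (\ref{eq4.1}) becomes
\begin{equation*}
\|T_{\mu,\nu}(S\varphi)-T_{\mu,\rho}(\varphi)\|_{L^{2}(\mu)}\leq C\|T_{\mu,\nu}(S\varphi)\|_{L^{2}(\mu)}+D\|T_{\mu,\rho}(\varphi)\|_{L^{2}(\mu)}+M\|\varphi\|_{L^{2}(\rho)}
\end{equation*}
for every $\varphi\in L^{2}(\rho)$, and I would use this identity throughout.

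For the upper (Bessel) bound I would start from the triangle inequality $\|T_{\mu,\rho}\varphi\|\leq\|T_{\mu,\nu}S\varphi\|+\|T_{\mu,\nu}S\varphi-T_{\mu,\rho}\varphi\|$, insert the displayed estimate, and rearrange to obtain $(1-D)\|T_{\mu,\rho}\varphi\|\leq(1+C)\|T_{\mu,\nu}S\varphi\|+M\|\varphi\|$. Combining with the Bessel bound for $\nu$ gives $\|T_{\mu,\nu}S\varphi\|\leq\sqrt{B}\|S\|\|\varphi\|$, so $\|T_{\mu,\rho}\|\leq[(1+C)\sqrt{B}\|S\|+M]/(1-D)$; an elementary rewriting shows this is $\sqrt{B'}$ for the $B'$ in the statement, which, together with $\|U_{\mu,\rho}\|=\|T_{\mu,\rho}\|$, delivers the upper frame inequality.

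The lower bound is the core step. For $f\in L^{2}(\mu)$ I would make the canonical-dual substitution $\varphi:=S^{-1}U_{\mu,\nu}\Lambda_{\nu}^{-1}f\in L^{2}(\rho)$; then $T_{\mu,\nu}(S\varphi)=T_{\mu,\nu}U_{\mu,\nu}\Lambda_{\nu}^{-1}f=f$. Using $\|\varphi\|_{L^{2}(\rho)}\leq\|S^{-1}\|\|U_{\mu,\nu}\Lambda_{\nu}^{-1}f\|_{L^{2}(\nu)}$ together with $\|U_{\mu,\nu}\Lambda_{\nu}^{-1}f\|^{2}=\langle\Lambda_{\nu}^{-1}f,f\rangle\leq\|f\|^{2}/A$, the perturbation collapses to
\begin{equation*}
\|f-T_{\mu,\rho}\varphi\|\leq\varepsilon\|f\|+D\|T_{\mu,\rho}\varphi\|,\qquad\varepsilon:=C+M\|S^{-1}\|/\sqrt{A}.
\end{equation*}
By hypothesis $\max\{\varepsilon,D\}<1$, so Proposition \ref{pro4.1} applied to the operator $W:=T_{\mu,\rho}S^{-1}U_{\mu,\nu}\Lambda_{\nu}^{-1}:L^{2}(\mu)\to L^{2}(\mu)$ shows $W$ is invertible with $\|W^{-1}\|\leq(1+D)/(1-\varepsilon)$.

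To finish, for each $f$ I would set $\varphi_{f}:=S^{-1}U_{\mu,\nu}\Lambda_{\nu}^{-1}W^{-1}f$, so that $T_{\mu,\rho}\varphi_{f}=WW^{-1}f=f$ and $\|\varphi_{f}\|_{L^{2}(\rho)}\leq\|S^{-1}\|(1+D)/(\sqrt{A}(1-\varepsilon))\cdot\|f\|$ by the same estimate. The duality $\|f\|^{2}=\langle f,T_{\mu,\rho}\varphi_{f}\rangle=\langle U_{\mu,\rho}f,\varphi_{f}\rangle\leq\|U_{\mu,\rho}f\|\|\varphi_{f}\|$ then yields $\|U_{\mu,\rho}f\|\geq\sqrt{A}\,\|S^{-1}\|^{-1}(1-\varepsilon)/(1+D)\cdot\|f\|$, which matches $\sqrt{A'}$ after observing that $1-(C+D+M\|S^{-1}\|/\sqrt{A})/(1+D)=(1-\varepsilon)/(1+D)$. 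The main obstacle is identifying the substitution $\varphi=S^{-1}U_{\mu,\nu}\Lambda_{\nu}^{-1}f$ which absorbs the nuisance term $M\|\varphi\|$ into a clean $\varepsilon\|f\|$, reducing the three-term perturbation to the two-term hypothesis of Proposition \ref{pro4.1}; once $W$ is shown invertible, the tracking of constants is routine.
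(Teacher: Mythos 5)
Your proposal is correct and follows essentially the same route as the paper: the same triangle-inequality argument for the Bessel bound, the same canonical-dual substitution $\varphi=S^{-1}U_{\mu,\nu}\Lambda_{\nu}^{-1}f$ (the paper's operator $V=T_{\mu,\nu}^{*}(T_{\mu,\nu}T_{\mu,\nu}^{*})^{-1}$ is exactly your $U_{\mu,\nu}\Lambda_{\nu}^{-1}$), the same appeal to Proposition \ref{pro4.1} to invert $W=T_{\mu,\rho}S^{-1}V$, and the same duality/Cauchy--Schwarz step for the lower bound. The only difference is cosmetic bookkeeping of the constants.
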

\begin{proof} We first prove the upper frame bound. For each $\varphi\in L^2(\rho)$, we obtain\begin{align*}
\|\widecheck{\varphi d\rho}\|_{L^2(\mu)}&\leq\|\widecheck{S(\varphi)d\nu}-\widecheck{\varphi d\rho}\|_{L^2(\mu)}+
\|\widecheck{S(\varphi)d\nu}\|_{L^2(\mu)}\\&\leq(1+C)\|\widecheck{S(\varphi)d\nu}\|_{L^2(\mu)}+D\|
\widecheck{\varphi d\rho}\|_{L^2(\mu)}+M\|\varphi\|_{L^2(\rho)}. \end{align*} So\begin{align*}
\|\widecheck{\varphi d\rho}\|_{L^2(\mu)}&\leq\frac{1+C}{1-D}\|\widecheck{S(\varphi)d\nu}\|_{L^2(\mu)}+
\frac{M}{1-D}\|\varphi\|_{L^2(\rho)}. \end{align*}
Since $\nu$ is a frame measure for $\mu$, so the synthesis operator $T_{\mu, \nu}$ is bounded and $\|T_{\mu, \nu}\|
\leq\sqrt{B}$. Let $T_{\mu, \rho}: L^2(\rho)\rightarrow L^2(\mu)$ be the synthesis operator for $\mu$ and $\rho$. Then we
have \begin{align*} \|T_{\mu, \rho}(\varphi)\|_{L^2(\mu)}&\leq \frac{1+C}{1-D}\|T_{\mu, \nu}S(\varphi)\|_{L^2(\mu)}
+\frac{M}{1-D}\|\varphi\|_{L^2(\rho)}\\&\leq\frac{(1+C)\sqrt{B}\|S\|+M}{1-D}\|\varphi\|_{L^2(\rho)}. \end{align*}
Hence \begin{align*} \|T_{\mu, \rho}(\varphi)\|^2_{L^2(\mu)}&\leq B\|S\|^2\Big(1+\frac{C+D+\frac{M}{\sqrt{B}
\|S\|}}{1-D}\Big)^2\|\varphi\|^2_{L^2(\rho)}. \end{align*} This estimate shows that $\rho$ is a Bessel measure for $\mu$
with the upper bound \begin{align*}B\|S\|^2\Big(1+\frac{C+D+\frac{M}{\sqrt{B}\|S\|}}{1-D}\Big)^2.\end{align*}
In order to show the lower frame bound of $\rho$. Since $\nu$ is a frame measure for $\mu$, the operator
$T_{\mu, \nu}T^*_{\mu, \nu}$ is invertible and $\|(T_{\mu, \nu}T^*_{\mu, \nu})^{-1}\|\leq\frac{1}{A}$. If we define
the operator $V:L^2(\mu)\rightarrow L^2(\nu)$ by $V(f)=T^*_{\mu, \nu}(T_{\mu, \nu}T^*_{\mu, \nu})^{-1}(f)$, then
$\|V\|\leq\frac{1}{\sqrt{A}}$. Now take $\varphi=S^{-1}V(f)\in L^2(\rho)$. Then by (\ref{eq4.1}) we have \begin{align*}
\|f-T_{\mu, \rho}S^{-1}V(f)\|_{L^2(\mu)}&=\|T_{\mu, \nu}S(\varphi)-T_{\mu, \rho}(\varphi)\|_{L^2(\mu)}\\&\leq
C\|f\|_{L^2(\mu)}+D\|T_{\mu, \rho}S^{-1}V(f)\|_{L^2(\mu)}+M\|S^{-1}V(f)\|_{L^2(\rho)}\\&\leq
\big(C+\frac{M}{\sqrt{A}\|S^{-1}\|^{-1}}\big)\|f\|_{L^2(\mu)}+D\|T_{\mu, \rho}S^{-1}V(f)\|_{L^2(\mu)}. \end{align*}
By Proposition \ref{pro4.1}, the operator $T_{\mu, \rho}S^{-1}V$ is invertible and \begin{align*} \|(T_{\mu, \rho}S^{-1}V)^{-1}\|
\leq\frac{1+D}{1-\big(C+\frac{M}{\sqrt{A}\|S^{-1}\|^{-1}}\big)}.\end{align*} Finally, for each $f\in L^2(\mu)$ we have
\begin{align*} \|f\|^4_{L^2(\mu)}&=|\langle f, f\rangle_{L^2(\mu)}|^2=|\langle T_{\mu, \rho}S^{-1}V(T_{\mu, \rho}
S^{-1}V)^{-1}(f), f\rangle_{L^2(\mu)}|^2\\&=|\langle S^{-1}V(T_{\mu, \rho}S^{-1}V)^{-1}(f), \widehat{fd\mu}\rangle_{L^2(\rho)}|^2\\&\leq\frac{\|S^{-1}\|^2}{A}\|(T_{\mu, \rho}S^{-1}V)^{-1}(f)\|^2_{L^2(\rho)}
\|\widehat{fd\mu}\|^2_{L^2(\rho)}\\&\leq\frac{\|S^{-1}\|^2}{A}\Big(\frac{1+D}{1-\big(C+\frac{M}{\sqrt{A}
\|S^{-1}\|^{-1}}\big)}\Big)^2\|f\|^2_{L^2(\mu)}\|\widehat{fd\mu}\|^2_{L^2(\rho)}.\end{align*}
Dividing both sides of this inequality by $\frac{\|S^{-1}\|^2}{A}\Big(\frac{1+D}{1-\big(C+\frac{M}{\sqrt{A}
\|S^{-1}\|^{-1}}\big)}\Big)^2$ completes the proof.
\end{proof}
The following theorem is about the stability of measures under small perturbations which have a (p, q)-frame measure.
\begin{theorem}\label{thm4.7}
Let $\mu, \lambda$ be two finite Borel measures on LCA group $G$ and $\nu$ be a Borel measure on $\widehat{G}$, respectively.
Let $\nu$ be a (p, q)-frame measure for $\mu$ with frame bounds $A, B$, and $S: L^p(\lambda)\rightarrow L^p(\mu)$
be a bounded invertible operator. Choose $C, D, M\geq 0$ such that $\max\big\{C+\frac{M}{\sqrt{A}\|S^{-1}\|^{-1}}, D\big\}<1$.
 Further, let \begin{equation}\label{eq4.2} \|\widehat{S(f)d\mu}-\widehat{fd\lambda}\|_{L^q(\nu)}\leq C\|\widehat{S(f)d\mu}
 \|_{L^q(\nu)}+D\|\widehat{fd\lambda}\|_{L^q(\nu)}+M\|f\|_{L^p(\lambda)},\end{equation}
for all $f\in L^p(\lambda)$. Then $\nu$ is a (p, q)-frame measure for $\lambda$ with frame bounds\begin{align*}
A\|S^{-1}\|^{-q}\Big(1-\frac{C+D+\frac{M}{\sqrt[q]{A}\|S^{-1}\|^{-1}}}{1+D}\Big)^q\hspace{0.2cm} \text{and}
\hspace{0.2cm} B\|S\|^q \Big(1+\frac{C+D+\frac{M}{\sqrt[q]{B}\|S\|}}{1-D}\Big)^q.\end{align*}
\end{theorem}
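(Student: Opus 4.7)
The plan is to adapt the strategy of Theorem \ref{thm4.6} to the Banach space setting, where no self-adjoint frame operator $T_{\mu,\nu}T^*_{\mu,\nu}$ is available. Both bounds will instead be obtained directly from the perturbation inequality (\ref{eq4.2}) combined with the $(p,q)$-frame bounds of $\nu$ for $\mu$ and the operator-norm estimates on $S$ and $S^{-1}$. First I would record two immediate consequences: applying the upper bound to $S(f)\in L^p(\mu)$ gives $\|\widehat{S(f)d\mu}\|_{L^q(\nu)}\leq\sqrt[q]{B}\|S\|\,\|f\|_{L^p(\lambda)}$, while applying the lower bound together with $\|f\|_{L^p(\lambda)}\leq\|S^{-1}\|\,\|S(f)\|_{L^p(\mu)}$ gives $\sqrt[q]{A}\|S^{-1}\|^{-1}\|f\|_{L^p(\lambda)}\leq\|\widehat{S(f)d\mu}\|_{L^q(\nu)}$.

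For the upper bound, I would estimate $\|\widehat{fd\lambda}\|_{L^q(\nu)}$ by the triangle inequality followed by (\ref{eq4.2}):
\[
\|\widehat{fd\lambda}\|_{L^q(\nu)}\leq(1+C)\|\widehat{S(f)d\mu}\|_{L^q(\nu)}+D\|\widehat{fd\lambda}\|_{L^q(\nu)}+M\|f\|_{L^p(\lambda)}.
\]
Since $D<1$, isolating $\|\widehat{fd\lambda}\|_{L^q(\nu)}$ on the left and invoking the upper estimate on $\|\widehat{S(f)d\mu}\|_{L^q(\nu)}$ yields
\[
\|\widehat{fd\lambda}\|_{L^q(\nu)}\leq\frac{(1+C)\sqrt[q]{B}\|S\|+M}{1-D}\|f\|_{L^p(\lambda)},
\]
which, after raising to the $q$th power and rewriting the right-hand factor as $\sqrt[q]{B}\|S\|\bigl(1+(C+D+\tfrac{M}{\sqrt[q]{B}\|S\|})/(1-D)\bigr)$, is precisely the claimed upper $(p,q)$-frame bound.

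For the lower bound, I would instead bound $\|\widehat{S(f)d\mu}\|_{L^q(\nu)}$ by the triangle inequality in the opposite direction and again use (\ref{eq4.2}) to obtain
\[
(1-C)\|\widehat{S(f)d\mu}\|_{L^q(\nu)}\leq(1+D)\|\widehat{fd\lambda}\|_{L^q(\nu)}+M\|f\|_{L^p(\lambda)}.
\]
Applying the lower estimate $\sqrt[q]{A}\|S^{-1}\|^{-1}\|f\|_{L^p(\lambda)}\leq\|\widehat{S(f)d\mu}\|_{L^q(\nu)}$ to the left-hand side and moving the $M\|f\|_{L^p(\lambda)}$ term across, the smallness hypothesis $C+M/(\sqrt[q]{A}\|S^{-1}\|^{-1})<1$ makes the coefficient of $\|f\|_{L^p(\lambda)}$ strictly positive, and rearranging gives
\[
\bigl(\sqrt[q]{A}\|S^{-1}\|^{-1}(1-C)-M\bigr)\|f\|_{L^p(\lambda)}\leq(1+D)\|\widehat{fd\lambda}\|_{L^q(\nu)}.
\]
Factoring $\sqrt[q]{A}\|S^{-1}\|^{-1}$ out of the left-hand coefficient recasts this as the claimed lower $(p,q)$-frame bound after raising to the $q$th power.

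The main obstacle, compared to the Hilbert space argument in Theorem \ref{thm4.6}, is precisely the absence of the invertible operator $T_{\mu,\nu}T^*_{\mu,\nu}$ and of the auxiliary dual-frame operator $V$. However, this actually makes the present argument shorter: the Neumann-type inversion provided by Proposition \ref{pro4.1} is not needed, because the direct rearrangement of (\ref{eq4.2}) already produces the lower bound, with the smallness condition on $C$, $M$ serving as the sole hypothesis that keeps the denominator positive. The only delicate point is confirming that the algebraic form of the extracted constant agrees with the expression stated in the theorem, which is a routine but careful verification.
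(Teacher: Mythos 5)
Your proposal is correct and follows essentially the same route as the paper's own proof: both bounds are obtained by combining the triangle inequality with (\ref{eq4.2}), the $(p,q)$-frame inequalities for $\nu$ applied to $S(f)$, and the norm estimates $\|S(f)\|_{L^p(\mu)}\leq\|S\|\|f\|_{L^p(\lambda)}$ and $\|f\|_{L^p(\lambda)}\leq\|S^{-1}\|\|S(f)\|_{L^p(\mu)}$, with no appeal to Proposition \ref{pro4.1}. Your closing observation that the Hilbert-space machinery of Theorem \ref{thm4.6} is not needed here matches exactly what the paper does.
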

\begin{proof} We first prove the upper frame bound. For each $f\in L^p(\lambda)$, we obtain
\begin{align*}\|\widehat{fd\lambda}\|_{L^q(\nu)}&\leq\|\widehat{S(f)d\mu}-\widehat{fd\lambda}\|_{L^q(\nu)}+
\|\widehat{S(f)d\mu}\|_{L^q(\nu)}\\&\leq(1+C)\|\widehat{S(f)d\mu}\|_{L^q(\nu)}+D\|\widehat{fd\lambda}\|_{L^q(\nu)}
+M\|f\|_{L^p(\lambda)}. \end{align*} Hence \begin{align*} \|\widehat{fd\lambda}\|_{L^q(\nu)}&\leq
\frac{1+C}{1-D}\|\widehat{S(f)d\mu}\|_{L^q(\nu)}+\frac{M}{1-D}\|f\|_{L^p(\lambda)}\\&\leq
\frac{1+C}{1-D}\sqrt[q]{B}\|S\|\|f\|_{L^p(\lambda)}+\frac{M}{1-D}\|f\|_{L^p(\lambda)}\\&=
\frac{(1+C)\sqrt[q]{B}\|S\|+M}{1-D}\|f\|_{L^p(\lambda)}. \end{align*} So  \begin{align*} \|\widehat{fd\lambda}
\|^q_{L^q(\nu)}&\leq B\|S\|^q\Big(1+\frac{C+D+\frac{M}{\sqrt[q]{B}\|S\|}}{1-D}\Big)^q\|f\|^q_{L^p(\lambda)}. \end{align*}
To prove the lower frame bound, we compute
\begin{align*}
\|\widehat{fd\lambda}\|_{L^q(\nu)}&\geq\|\widehat{S(f)d\mu}\|_{L^q(\nu)}-\|\widehat{S(f)d\mu}-\widehat{fd\lambda}
\|_{L^q(\nu)}\\&\geq(1-C)\|\widehat{S(f)d\mu}\|_{L^q(\nu)}-D\|\widehat{fd\lambda}\|_{L^q(\nu)}-M\|f\|_{L^p(\lambda)}.
\end{align*} Hence
\begin{align*}(1+D)\|\widehat{fd\lambda}\|_{L^q(\nu)}&\geq (1-C)\|\widehat{S(f)d\mu}\|_{L^q(\nu)}-M\|f\|_{L^p(\lambda)}
\\&\geq (1-C)\sqrt[q]{A}\|S(f)\|_{L^p(\mu)}-M\|f\|_{L^p(\lambda)}\\&\geq\big((1-C)\sqrt[q]{A}\|S^{-1}\|^{-1}-M\big)
\|f\|_{L^p(\lambda)}. \end{align*} Therefore
\begin{align*}\|\widehat{fd\lambda}\|^q_{L^q(\nu)}&\geq\Big(\frac{(1-C)\sqrt[q]{A}\|S^{-1}\|^{-1}-M}{1+D}\Big)^q
\|f\|^q_{L^p(\lambda)}\\&\geq A\|S^{-1}\|^{-q}\Big(\frac{1-C-\frac{M}{\sqrt[q]{A}\|S^{-1}\|^{-1}}}{1+D}\Big)^q
\|f\|^q_{L^p(\lambda)}\\&\geq A\|S^{-1}\|^{-q}\Big(1-\frac{C+D+\frac{M}{\sqrt[q]{A}\|S^{-1}\|^{-1}}}{1+D}\Big)^q
\|f\|^q_{L^p(\lambda)}. \end{align*}\end{proof}
As a consequence of Theorem \ref{thm4.7}, we have the following result.
\begin{corollary}\label{cor4.8}
Let $\mu, \lambda$ be two finite Borel measures on $G$ and let $\nu$ be a (p, q)-frame measure for $\mu$ with frame bounds
$A, B$. Suppose that $S: L^p(\lambda)\rightarrow L^p(\mu)$ is a bounded invertible operator. Choose $C, D, M\geq 0$ such
that $\max\{C, D, M\}<1$. Further, let
\begin{align*} \|\widehat{S(f)d\mu}-\widehat{fd\lambda}\|^2_{L^q(\nu)}\leq C\|\widehat{S(f)d\mu}\|^2_{L^q(\nu)}
&+2D\|\widehat{S(f)d\mu}\|_{L^q(\nu)}\|\widehat{fd\lambda}\|_{L^q(\nu)}\\&+M\|\widehat{fd\lambda}\|^2_{L^q(\nu)},
\end{align*} for all $f\in L^p(\lambda)$. Then $\nu$ is a (p, q)-frame measure for $\lambda$ with frame bounds
\begin{align*} A\|S^{-1}\|^{-q}\Big(\frac{1-\sqrt{\eta}}{1+\sqrt{\eta}}\Big)^q
\hspace{0.2cm} \text{and}\hspace{0.2cm} B\|S\|^q\Big(\frac{1+\sqrt{\eta}}{1-\sqrt{\eta}}\Big)^q.
\end{align*}\end{corollary}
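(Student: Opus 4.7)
The plan is to reduce Corollary \ref{cor4.8} to a direct application of Theorem \ref{thm4.7} by converting the quadratic-type perturbation inequality into a linear one. I would set $\eta := \max\{C, D, M\}$ so that $0 \leq \eta < 1$ by hypothesis; this identifies the (otherwise unlabeled) quantity $\eta$ appearing in the statement. Writing $u := \widehat{S(f)d\mu}$ and $v := \widehat{fd\lambda}$ in $L^q(\nu)$, and noting that each of $C, D, M$ is dominated by $\eta$, the hypothesized inequality is majorized by
\begin{align*}
\|u-v\|_{L^q(\nu)}^2 &\leq \eta\|u\|_{L^q(\nu)}^2 + 2\eta\|u\|_{L^q(\nu)}\|v\|_{L^q(\nu)} + \eta\|v\|_{L^q(\nu)}^2 \\
&= \eta\left(\|u\|_{L^q(\nu)} + \|v\|_{L^q(\nu)}\right)^2.
\end{align*}
Taking square roots yields
\begin{align*}
\|\widehat{S(f)d\mu}-\widehat{fd\lambda}\|_{L^q(\nu)} \leq \sqrt{\eta}\,\|\widehat{S(f)d\mu}\|_{L^q(\nu)} + \sqrt{\eta}\,\|\widehat{fd\lambda}\|_{L^q(\nu)} + 0\cdot\|f\|_{L^p(\lambda)},
\end{align*}
which is precisely condition (\ref{eq4.2}) of Theorem \ref{thm4.7} with the parameter choices $C' = \sqrt{\eta}$, $D' = \sqrt{\eta}$, $M' = 0$.

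Next I would verify the admissibility requirement $\max\{C' + M'/(\sqrt[q]{A}\|S^{-1}\|^{-1}),\ D'\} < 1$ of Theorem \ref{thm4.7}. With these substitutions it collapses to $\sqrt{\eta} < 1$, which holds by the choice of $\eta$. Applying Theorem \ref{thm4.7} then delivers that $\nu$ is a (p, q)-frame measure for $\lambda$, and a direct simplification of the quoted bounds produces the lower bound
\begin{align*}
A\|S^{-1}\|^{-q}\left(1 - \frac{\sqrt{\eta}+\sqrt{\eta}}{1+\sqrt{\eta}}\right)^q = A\|S^{-1}\|^{-q}\left(\frac{1-\sqrt{\eta}}{1+\sqrt{\eta}}\right)^q
\end{align*}
and the upper bound
\begin{align*}
B\|S\|^q\left(1 + \frac{\sqrt{\eta}+\sqrt{\eta}}{1-\sqrt{\eta}}\right)^q = B\|S\|^q\left(\frac{1+\sqrt{\eta}}{1-\sqrt{\eta}}\right)^q,
\end{align*}
exactly matching the expressions claimed in the corollary.

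The main, and really only, substantive step is the opening observation that the right-hand side of the hypothesized quadratic inequality becomes the perfect square $\eta(\|u\|+\|v\|)^2$ once $C$, $D$, $M$ are uniformly replaced by their maximum $\eta$; this is what makes the extraction of a square root produce a bound of precisely the linear form required by Theorem \ref{thm4.7}. After that, the proof is mechanical. A secondary point is that $\eta$ is not defined in the corollary's statement, so part of the task is to pin down the correct interpretation, which the bound-matching computation confirms must be $\eta = \max\{C, D, M\}$.
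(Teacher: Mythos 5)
Your proposal is correct and follows essentially the same route as the paper: the paper likewise sets $\eta=\max\{C,D,M\}$, observes that the hypothesis forces $\|\widehat{S(f)d\mu}-\widehat{fd\lambda}\|_{L^q(\nu)}\leq\sqrt{\eta}\bigl(\|\widehat{S(f)d\mu}\|_{L^q(\nu)}+\|\widehat{fd\lambda}\|_{L^q(\nu)}\bigr)$ via the perfect-square majorization, and then invokes Theorem \ref{thm4.7} with $M'=0$. Your write-up merely makes explicit the admissibility check and the simplification of the frame bounds, which the paper leaves to the reader.
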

\begin{proof} This claim follows immediately from the fact that for each $f\in L^p(\lambda)$ and $\eta=\max\{C, D, M\}$ we have
\begin{align*} \|\widehat{S(f)d\mu}&-\widehat{fd\lambda}\|_{L^q(\nu)}\leq \sqrt{\eta}\big(\|\widehat{S(f)d\mu}\|_{L^q(\nu)}
+\|\widehat{fd\lambda}\|_{L^q(\nu)}\big).\end{align*} Now the conclusion follows from Theorem \ref{thm4.7}.\end{proof}
\begin{theorem}\label{thm4.9}
Let $\mu, \lambda$ be two finite Borel measures on LCA group $G$ and let $\nu$ be a (p, q)-frame measure for $\mu$.
Suppose that $S: L^p(\lambda)\rightarrow L^p(\mu)$ is a bounded invertible operator.
Then $\nu$ is a (p, q)-frame measure for $\lambda$ if and only if there exists a constant $M>1$ such that \begin{equation}
\label{eq4.3} \|\widehat{S(f)d\mu}-\widehat{fd\lambda}\|_{L^q(\nu)}\leq M\min\big\{\|\widehat{S(f)d\mu}\|_{L^q(\nu)},
\|\widehat{fd\lambda}\|_{L^q(\nu)}\big\},\hspace{0.5cm} \forall f\in L^p(\lambda). \end{equation}\end{theorem}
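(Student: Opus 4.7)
The plan is to prove both implications by direct triangle-inequality arguments, relying only on the $(p,q)$-frame measure inequalities for $\mu$ and on the operator norm estimates $\|S^{-1}\|^{-1}\|f\|_{L^p(\lambda)} \leq \|S(f)\|_{L^p(\mu)} \leq \|S\|\|f\|_{L^p(\lambda)}$ coming from the bounded invertibility of $S$.

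For the forward direction, suppose $\nu$ is a $(p,q)$-frame measure for $\lambda$ with some bounds $A', B'$, in addition to being a $(p,q)$-frame measure for $\mu$ with bounds $A, B$. Combining these four bounds with the operator estimates above shows that, for every $f \in L^p(\lambda)$ with $f \neq 0$, both $\|\widehat{S(f)d\mu}\|_{L^q(\nu)}$ and $\|\widehat{fd\lambda}\|_{L^q(\nu)}$ are sandwiched between positive multiples of $\|f\|_{L^p(\lambda)}$, so their ratio is controlled above and below by positive constants. The triangle inequality
\[\|\widehat{S(f)d\mu}-\widehat{fd\lambda}\|_{L^q(\nu)} \leq \|\widehat{S(f)d\mu}\|_{L^q(\nu)} + \|\widehat{fd\lambda}\|_{L^q(\nu)}\]
then, after dividing by the smaller of the two right-hand terms, yields (\ref{eq4.3}) with
\[M = 1 + \max\Bigl\{\frac{\sqrt[q]{B'}}{\sqrt[q]{A}\,\|S^{-1}\|^{-1}},\; \frac{\sqrt[q]{B}\,\|S\|}{\sqrt[q]{A'}}\Bigr\},\]
which is automatically strictly greater than $1$; the case $f=0$ is trivial.

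For the backward direction, I split the minimum inequality into the two one-sided bounds
\[\|\widehat{S(f)d\mu}-\widehat{fd\lambda}\|_{L^q(\nu)} \leq M\,\|\widehat{S(f)d\mu}\|_{L^q(\nu)}\]
and
\[\|\widehat{S(f)d\mu}-\widehat{fd\lambda}\|_{L^q(\nu)} \leq M\,\|\widehat{fd\lambda}\|_{L^q(\nu)}.\]
The first, together with the triangle inequality and the upper $(p,q)$-frame bound of $\nu$ for $\mu$, gives $\|\widehat{fd\lambda}\|_{L^q(\nu)} \leq (M+1)\sqrt[q]{B}\,\|S\|\,\|f\|_{L^p(\lambda)}$, so the upper frame bound for $\lambda$ is $(M+1)^q B\,\|S\|^q$. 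The second, combined with the triangle inequality, the lower $(p,q)$-frame bound of $\nu$ for $\mu$, and the estimate $\|S(f)\|_{L^p(\mu)} \geq \|S^{-1}\|^{-1}\|f\|_{L^p(\lambda)}$, gives $\|\widehat{fd\lambda}\|_{L^q(\nu)} \geq \frac{\sqrt[q]{A}\,\|S^{-1}\|^{-1}}{M+1}\|f\|_{L^p(\lambda)}$, producing the lower frame bound $A\,\|S^{-1}\|^{-q}/(M+1)^q$.

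There is no serious obstacle in this proof; it is a clean exercise in bookkeeping of constants via the triangle inequality. The only subtlety is that the forward direction forces the constant $M$ to exceed $1$, which is automatic from the formula above since one is adding a strictly positive quantity to $1$; the backward direction in fact goes through for every $M>0$, so the assumption $M>1$ in the statement is simply the minimal condition under which both implications can simultaneously be formulated.
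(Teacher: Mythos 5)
Your proof is correct and follows essentially the same route as the paper's: both directions are handled by the triangle inequality combined with the $(p,q)$-frame bounds for $\mu$ (and, in the forward direction, for $\lambda$) and the norm estimates $\|S^{-1}\|^{-1}\|f\|_{L^p(\lambda)}\leq\|S(f)\|_{L^p(\mu)}\leq\|S\|\,\|f\|_{L^p(\lambda)}$, and your constant $M$ and the resulting frame bounds for $\lambda$ coincide with those in the paper (with $A',B'$ in place of the paper's $C,D$). No further comment is needed.
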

\begin{proof} Let $\nu$ be a (p, q)-frame measure for $\mu$ and $\lambda$ with frame bounds $A, B$ and $C, D$, respectively.
Then by the (p, q)-frame measure inequalities we have \begin{align*} \|\widehat{S(f)d\mu}-\widehat{fd\lambda}\|_{L^q(\nu)}
\leq\Big(1+\frac{\sqrt[q]{D}\|S^{-1}\|}{\sqrt[q]{A}}\Big)\|\widehat{S(f)d\mu}\|_{L^q(\nu)}.\end{align*} Similarly, we obtain
\begin{align*} \|\widehat{S(f)d\mu}-\widehat{fd\lambda}\|_{L^q(\nu)}\leq\Big(1+\frac{\sqrt[q]{B}\|S\|}{\sqrt[q]{C}}
\Big)\|\widehat{fd\lambda}\|_{L^q(\nu)}.\end{align*} Choosing $M=1+\max\Big\{\frac{\sqrt[q]{D}\|S^{-1}\|}{\sqrt[q]{A}},
\frac{\sqrt[q]{B}\|S\|}{\sqrt[q]{C}}\Big\}$, completes the proof. For the converse, suppose that $A, B$ are the frame bounds for
(p, q)-frame measure $\nu$ for $\mu$. Then for each $f\in L^p(\lambda)$ we have \begin{align*} \sqrt[q]{A}\|S^{-1}\|^{-1}
\|f\|_{L^p(\lambda)} &\leq\sqrt[q]{A}\|S(f)\|_{L^p(\mu)}\leq\|\widehat{S(f)d\mu}\|_{L^q(\nu)}\\&\leq
\|\widehat{S(f)d\mu}-\widehat{fd\lambda}\|_{L^q(\nu)}+\|\widehat{fd\lambda}\|_{L^q(\nu)}\\&\leq(1+M)
\|\widehat{fd\lambda}\|_{L^q(\nu)}\\&\leq(1+M)\Big(\|\widehat{S(f)d\mu}-\widehat{fd\lambda}\|_{L^q(\nu)}+
\|\widehat{S(f)d\mu}\|_{L^q(\nu)}\Big)\\&\leq(1+M)^2\|\widehat{S(f)d\mu}\|_{L^q(\nu)}\leq\sqrt[q]{B}(1+M)^2
\|S(f)\|_{L^p(\mu)}\\&\leq\sqrt[q]{B}(1+M)^2\|S\|\|f\|_{L^p(\lambda)}.\end{align*} Thus \begin{align*}\frac{\sqrt[q]{A}
\|S^{-1}\|^{-1}}{1+M}\|f\|_{L^p(\lambda)}\leq\|\widehat{fd\lambda}\|_{L^q(\nu)}\leq\sqrt[q]{B}(1+M)\|S\|
\|f\|_{L^p(\lambda)}. \end{align*}\end{proof}

\end{document}